\documentclass[a4paper, reqno]{amsart}

\usepackage[UKenglish]{babel}
\usepackage{amsmath, amssymb, amsthm}
\usepackage{scrextend}
\usepackage[hidelinks]{hyperref}
\usepackage{xpatch}
\usepackage{color}
\usepackage{tikz-cd}
\usepackage{enumitem}
\usepackage{theoremref}
\usepackage{xspace}

\usepackage[inline,final]{showlabels}
\showlabels{thlabel}

\usepackage{imakeidx}
\makeindex[name=notes,title=Notes throughout the document (red text),columns=1]

\xpatchcmd{\proof}{\itshape}{\normalfont\bfseries}{}{}
\newtheoremstyle{repeat}{}{}{\itshape}{}{\bfseries}{.}{.5em}{#3, repeated}

\newtheorem{theorem}{Theorem}[section]
\newtheorem{proposition}[theorem]{Proposition}
\newtheorem{lemma}[theorem]{Lemma}

\newtheorem{fact}[theorem]{Fact}

\theoremstyle{definition}
\newtheorem{definition}[theorem]{Definition}
\newtheorem{remark}[theorem]{Remark}

\newtheorem{example}[theorem]{Example}

\theoremstyle{repeat}
\newtheorem*{repeated-theorem}{Repeat}

\renewcommand{\L}{\mathcal{L}}
\newcommand{\MM}{\mathfrak{M}}

\DeclareMathOperator{\tp}{tp}
\DeclareMathOperator{\qftp}{qftp}

\DeclareMathOperator{\Aut}{Aut}

\renewcommand{\d}{\operatorname{d}}
\DeclareMathOperator{\EM}{EM}
\DeclareMathOperator{\cl}{cl}
\DeclareMathOperator{\cf}{cf}

\renewcommand{\phi}{\varphi}

\newcommand{\equivsqf}{\equiv^\textup{s-qf}}
\newcommand{\equivstrqf}{\equiv^\textup{str-qf}}
\newcommand{\equivstrOqf}{\equiv^\textup{str$_0$-qf}}
\newcommand{\equivarqf}{\equiv^\textup{ar-qf}}

\newcommand{\lex}{{\textup{lex}}}
\newcommand{\len}{{\textup{len}}}
\newcommand{\TPtwo}{$\mathsf{TP_2}$\xspace}
\newcommand{\s}{{\textup{s}}}
\newcommand{\str}{{\textup{str}}}
\newcommand{\ar}{{\textup{ar}}}

\def\Ind#1#2{#1\setbox0=\hbox{$#1x$}\kern\wd0\hbox to 0pt{\hss$#1\mid$\hss}
\lower.9\ht0\hbox to 0pt{\hss$#1\smile$\hss}\kern\wd0}

\def\Notind#1#2{#1\setbox0=\hbox{$#1x$}\kern\wd0\hbox to 0pt{\mathchardef
\nn="3236\hss$#1\nn$\kern1.4\wd0\hss}\hbox to 0pt{\hss$#1\mid$\hss}\lower.9\ht0
\hbox to 0pt{\hss$#1\smile$\hss}\kern\wd0}

\title{Positive indiscernibles}
\date{\today}
\author{Mark Kamsma}

\address[Mark Kamsma]{Department of Mathematics, Imperial College London, UK}
\email[]{mark@markkamsma.nl}
\urladdr[]{https://markkamsma.nl}

\subjclass{03C45 (Primary), 03C95, 03B20 (Secondary)}
\keywords{positive logic; generalised indiscernibles; modelling theorem; indiscernible tree; indiscernible array; tree property}
 
\begin{document}

\begin{abstract}
We generalise various theorems for finding indiscernible trees and arrays to positive logic: based on an existing modelling theorem for s-trees, we prove modelling theorems for str-trees, str$_0$-trees (the reduct of str-trees that forgets the length comparison relation) and arrays. In doing so, we prove stronger versions for basing---rather than locally basing or EM-basing---str-trees on s-trees and str$_0$-trees on str-trees. As an application we show that a thick positive theory has $k$-\TPtwo iff it has $2$-\TPtwo.
\end{abstract}

\maketitle

\tableofcontents

\section{Introduction}
As model theory in positive logic is maturing (see e.g.\ \cite{ben-yaacov_positive_2003, ben-yaacov_simplicity_2003, ben-yaacov_fondements_2007, haykazyan_existentially_2021, dobrowolski_kim-independence_2022, dmitrieva_dividing_2023}) the need for the development of tools available to us in full first-order logic becomes more and more necessary. An important notion in model-theoretic arguments is that of indiscernibles. The most popular occurrence of this is an indiscernible sequence: a sequence where any two finite subsequences have the same type. The notion of indiscernibility can be generalised by replacing the linear order that indexes an indiscernible sequence by another indexing structure. In this paper we consider various kinds of trees and an array as indexing structure. The main results are modelling theorems for positive logic, which allow us to find indiscernibles indexed by trees or arrays based on an arbitrary set of parameters indexed by the same structure, while inheriting certain local structure.

Tree indiscernibles have already found deep applications in model theory. For example, in the development of Kim-independence for NSOP$_1$ theories \cite{kaplan_kim-independence_2020, dobrowolski_kim-independence_2022}. Their original motivation stems from the proof of \cite[Theorem III.7.11]{shelah_classification_1990}, which is the celebrated theorem that a theory is simple iff it is NTP$_1$ and NTP$_2$. In that proof the existence of tree indiscernibles is claimed, and the details are filled in in \cite{kim_tree_2014}.

As any full first-order theory can be seen as a (thick) positive theory (see \thref{morleyisation}), our main results are a direct generalisation of existing results for full first-order logic \cite{kim_tree_2014, takeuchi_existence_2012, scow_indiscernibles_2015}. The main difficulty in generalising the proofs is that arguments using Ramsey's theorem break down. In these arguments one can write down a partial type that expresses indiscernibility formula by formula, using formulas of the form $\forall x_1 x_2 (\phi(x_1) \leftrightarrow \phi(x_2))$. By compactness one can then reduce to finding parameters that are indiscernible with respect to a finite set of formulas. This can be done using Ramsey's theorem by colouring using types restricted to this finite set of formulas, of which there are finitely many. In positive logic, under the assumption of  thickness, we can still write down a partial type expressing indiscernibility. However, this is no longer necessarily done formula by formula. So instead we have to use all possible types as colours, of which there are infinitely many. The solution is to replace the use of Ramsey's theorem by the Erd\H{o}s-Rado theorem. This makes arguments more complicated, but we also get stronger statements (see \thref{str-basing,str0-basing}).

\textbf{Main results.} Our main results are the following three modelling theorems. The thickness assumption is the mild assumption that being an indiscernible sequence is type-definable (\thref{thickness}). Further justification and reason for this assumption is given in the discussion after \thref{thickness,tree-tuple-extra-notation}. For the other definitions involved, we refer the reader to Section \ref{sec:generalised-indiscernibles}.
\begin{theorem}[str-modelling]
\thlabel{str-modelling}
Let $T$ be a thick theory. Let $(a_\eta)_{\eta \in \omega^{<\omega}}$ be a tree of tuples of the same length and let $C$ be any set of parameters, then there is a tree $(b_\eta)_{\eta \in \omega^{<\omega}}$ that is str-indiscernible over $C$ and $\EM_\str$-based on $(a_\eta)_{\eta \in \omega^{<\omega}}$ over $C$.
\end{theorem}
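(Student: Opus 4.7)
The plan is to factor the argument through the existing modelling theorem for s-trees and then upgrade s-indiscernibility to str-indiscernibility. First, I would apply the s-modelling theorem to $(a_\eta)_{\eta \in \omega^{<\omega}}$ over $C$ to obtain a tree $(a'_\eta)_{\eta \in \omega^{<\omega}}$ that is s-indiscernible over $C$ and $\EM_\s$-based on $(a_\eta)$ over $C$. This black-boxes the treatment of the meet and lexicographic-order relations.

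Second, I would refine $(a'_\eta)$ to an str-indiscernible tree $(b_\eta)_{\eta \in \omega^{<\omega}}$ that is (strongly) based on $(a'_\eta)$ over $C$. The only new data in the str-language compared to the s-language is the length-comparison relation, so on each fixed s-isomorphism class of finite subtrees the str-isomorphism class is cut out by finitely many additional constraints on pairs of heights. The upgrade then amounts to colouring finite configurations by their str-class together with the partial type they realise over $C$, and extracting a monochromatic substructure indexed by $\omega^{<\omega}$. This is exactly the str-basing statement flagged in the abstract.

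Combining the two steps: strong basing of $(b_\eta)$ on $(a'_\eta)$ implies $\EM_\str$-basing on $(a'_\eta)$, and the $\EM_\s$-basing of $(a'_\eta)$ on $(a_\eta)$ promotes to $\EM_\str$-basing once one tracks the str-shape of each finite configuration. Hence $(b_\eta)$ is str-indiscernible over $C$ and $\EM_\str$-based on $(a_\eta)$ over $C$, as required.

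The main obstacle is the second step. In positive logic the colouring in question takes values in an infinite space of partial types, so Ramsey's theorem is unavailable; the natural substitute is the Erd\H{o}s--Rado theorem applied inside a sufficiently saturated monster, followed by a compactness extraction to recover an $\omega^{<\omega}$-indexed tree. The thickness hypothesis is what makes str-indiscernibility type-definable and hence stable under this extraction. Strengthening the conclusion from EM- or local basing to true basing, as the abstract advertises, is the delicate and novel technical point that distinguishes the positive argument from its classical predecessor.
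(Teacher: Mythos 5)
Your overall decomposition matches the paper's: apply s-modelling to obtain an s-indiscernible tree $(a'_\eta)_{\eta\in\omega^{<\omega}}$ that is $\EM_\s$-based on $(a_\eta)$, then upgrade to str-indiscernibility via an Erd\H{o}s--Rado argument (this is the content of the paper's str-basing theorem, \thref{str-basing}), and finish by transitivity of $\EM_\str$-basing together with the observation that both str-basing and $\EM_\s$-basing imply $\EM_\str$-basing. You also correctly identify the key technical ingredients: Erd\H{o}s--Rado replacing Ramsey, thickness providing type-definability of indiscernibility, and the strengthening from EM-basing to genuine basing.

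There is, however, a gap in how you propose to run the Erd\H{o}s--Rado step. You speak of colouring finite configurations of $(a'_\eta)_{\eta\in\omega^{<\omega}}$ and extracting a monochromatic piece, but $\omega^{<\omega}$ has only countably many levels, which is far too few to extract anything by Erd\H{o}s--Rado when the colour space is the set of types over $C$ (of size up to $2^{|T|+|C|+\kappa}$). The str-basing theorem needs an input tree indexed by $\omega^{<\lambda}$ with $\lambda = \beth_{(2^{|T|+|C|+\kappa})^+}$. So before the colouring argument can even begin, one must stretch $(a'_\eta)_{\eta\in\omega^{<\omega}}$ to a much taller tree $(a''_\eta)_{\eta\in\omega^{<\lambda}}$ that is still s-indiscernible over $C$ and $\EM_\str$-based on $(a'_\eta)$ over $C$. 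This is precisely where compactness and thickness (type-definability of s-indiscernibility, \thref{s-str-str0-indiscernible-type-definable}) enter the top-level argument: the required partial type is finitely satisfiable because $(a'_\eta)$ realises every finite fragment after a suitable renaming of levels, and this renaming is harmless exactly because we only ask for $\EM_\str$-basing, which is insensitive to which levels are used. Your phrase ``followed by a compactness extraction to recover an $\omega^{<\omega}$-indexed tree'' places compactness and thickness at the wrong end of the argument (or conflates them with the compactness internal to the str-basing proof); the crucial use of compactness and thickness happens \emph{before} Erd\H{o}s--Rado, to manufacture the tall s-indiscernible tree that Erd\H{o}s--Rado needs as input.
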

In the process of proving the above theorem we prove \thref{str-basing}, which states that given a tall enough s-indiscernible tree we can base an str-indiscernible tree on it. This statement is interesting on its own because more information is carried over when basing one tree on another, instead of EM-basing trees on each other. Similarly, in proving the following theorem we prove that str$_0$-trees can be based on str-trees, see \thref{str0-basing}.
\begin{theorem}[str$_0$-modelling]
\thlabel{str0-modelling}
Let $T$ be a thick theory. Let $(a_\eta)_{\eta \in \omega^{<\omega}}$ be a tree of tuples of the same length and let $C$ be any set of parameters, then there is a tree $(b_\eta)_{\eta \in \omega^{<\omega}}$ that is str$_0$-indiscernible over $C$ and $\EM_{\str_0}$-based on $(a_\eta)_{\eta \in \omega^{<\omega}}$ over $C$.
\end{theorem}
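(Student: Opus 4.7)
The plan is to mirror the proof of \thref{str-modelling}, climbing one reduct higher: whereas str-modelling is obtained by applying \thref{str-basing} to an s-indiscernible tree produced by an s-modelling theorem, I would obtain str$_0$-modelling by applying \thref{str0-basing} to an str-indiscernible tree produced by \thref{str-modelling} itself.

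The first step is to apply \thref{str-modelling} to the given tree $(a_\eta)_{\eta \in \omega^{<\omega}}$ and parameters $C$, producing a tree $(c_\eta)_{\eta \in \omega^{<\omega}}$ that is str-indiscernible over $C$ and $\EM_\str$-based on $(a_\eta)_{\eta \in \omega^{<\omega}}$ over $C$. Since str$_0$ is the reduct of str that forgets the length comparison relation, the same tree $(c_\eta)$ is automatically $\EM_{\str_0}$-based on $(a_\eta)$ over $C$. If the str$_0$-basing theorem \thref{str0-basing} demands an str-indiscernible tree of some prescribed height (as one would expect, by analogy with \thref{str-basing}), I would then stretch $(c_\eta)$ via thickness and compactness to an str-indiscernible tree $(c'_\eta)_{\eta \in \lambda^{<\lambda}}$ over $C$ of the required size, having the same str-EM-type over $C$ as $(c_\eta)$; this preserves $\EM_{\str_0}$-basing on $(a_\eta)$ over $C$.

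Finally, apply \thref{str0-basing} to $(c'_\eta)$ over $C$ to obtain a tree $(b_\eta)_{\eta \in \omega^{<\omega}}$ that is str$_0$-indiscernible over $C$ and str$_0$-based on $(c'_\eta)$ over $C$. Since strong basing implies $\EM_{\str_0}$-basing, and $\EM_{\str_0}$-basing composes transitively, combining with the previous step yields that $(b_\eta)$ is $\EM_{\str_0}$-based on $(a_\eta)$ over $C$, which is what is required. The main obstacle is bookkeeping around the hypotheses: verifying that the stretch $(c'_\eta)$ remains str-indiscernible over $C$ (this is precisely where thickness of $T$ is essential, since it makes str-indiscernibility type-definable and hence preserved under compactness), and arranging whatever "tall enough" condition \thref{str0-basing} imposes by choosing $\lambda$ large enough in the stretch. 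Once that is in place, the transfer of $\EM_{\str_0}$-basing through the chain $(a_\eta) \to (c'_\eta) \to (b_\eta)$ is formal.
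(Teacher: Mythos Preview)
Your approach is correct and matches the paper's proof: apply \thref{str-modelling} to get an str-indiscernible tree, then apply \thref{str0-basing}, and conclude by transitivity of $\EM_{\str_0}$-basing. The one simplification is that \thref{str0-basing} in fact takes as input an str-indiscernible tree indexed by $\omega^{<\omega}$ with no tallness hypothesis, so your conditional stretching step is unnecessary.
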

\begin{theorem}[array modelling]
\thlabel{array-modelling}
Let $T$ be a thick theory. Let $(a_{i,j})_{i,j < \omega}$ be an array of tuples of the same length and let $C$ be any set of parameters, then there is an array $(b_{i,j})_{i,j < \omega}$ that is array-indiscernible over $C$ and $\EM_{\ar}$-based on $(a_{i,j})_{i,j < \omega}$ over $C$.
\end{theorem}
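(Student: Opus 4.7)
The plan is to deduce \thref{array-modelling} from the str$_0$-modelling theorem (\thref{str0-modelling}) by viewing an array as the length-$2$ layer of a tree, via the embedding $\omega\times\omega\hookrightarrow\omega^{<\omega}$ sending $(i,j)$ to $\langle i,j\rangle$. Concretely, I would build an auxiliary tree $(a'_\eta)_{\eta\in\omega^{<\omega}}$ of tuples of the common length by setting $a'_{\langle i,j\rangle}:=a_{i,j}$ and padding all remaining nodes with a fixed tuple, such as $a_{0,0}$. Applying \thref{str0-modelling} with parameter set $C$ then produces an str$_0$-indiscernible tree $(b'_\eta)_{\eta\in\omega^{<\omega}}$ that is $\EM_{\str_0}$-based on $(a'_\eta)$ over $C$, and I would define $b_{i,j}:=b'_{\langle i,j\rangle}$.

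The combinatorial engine is the observation that the str$_0$-qftp of a tuple of length-$2$ tree nodes exactly encodes the array-qftp of the corresponding array tuple: any two length-$2$ nodes meet either at the root or at a length-$1$ node, and the triple (meet, $\trianglelefteq$, $<_\lex$) recovers the row-sameness pattern together with the within-row column order. Consequently, restricting str$_0$-indiscernibility of $(b'_\eta)$ to the length-$2$ layer immediately yields array-indiscernibility of $(b_{i,j})$ over $C$, and conversely every array-qftp is realised by some length-$2$ tuple in $\omega^{<\omega}$.

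For the $\EM_\ar$-basedness clause, suppose a formula $\phi$ with parameters in $C$ is satisfied by an array tuple $(b_{i_k,j_k})_k$. Then $\EM_{\str_0}$-basedness produces a tuple $\bar\sigma$ in $\omega^{<\omega}$ of the same str$_0$-qftp as $(\langle i_k,j_k\rangle)_k$ with $\phi(\bar a'_{\bar\sigma})$ holding. The main obstacle I expect is that str$_0$ does not detect absolute length, so a priori $\bar\sigma$ need not sit at length $2$, in which case $\bar a'_{\bar\sigma}$ could involve the padding rather than genuine array values. I would handle this by exploiting the homogeneity of $\omega^{<\omega}$: every str$_0$-qftp realised by a length-$2$ tuple is also realised by a length-$2$ tuple in $\omega^{<\omega}$, which together with a careful choice of padding lets me replace $\bar\sigma$ by a length-$2$ witness and so extract an array pattern in $(a_{i,j})$ with the required array-qftp.
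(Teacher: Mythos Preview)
Your central combinatorial claim is false, and this breaks the $\EM_\ar$-basing step. You assert that on length-$2$ nodes the str$_0$-qftp ``exactly encodes'' the array-qftp, because the meet being at the root versus at a length-$1$ node distinguishes different rows from same rows. But $\L_{\str_0}=\{\unlhd,\wedge,<_\lex\}$ has no way to detect whether a given meet is the root: that is a statement about absolute level, which str$_0$ deliberately forgets. Concretely, the pairs $(\langle 0,0\rangle,\langle 0,1\rangle)$ and $(\langle 0,0\rangle,\langle 1,0\rangle)$ have \emph{identical} str$_0$-qftps (two $\unlhd$-incomparable nodes with the first $<_\lex$ the second, and their meet strictly below both), yet they correspond to the distinct array-qftps ``$<_2$'' and ``$<_1$''. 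The forward implication you need for indiscernibility transfer (same array-qftp $\Rightarrow$ same str$_0$-qftp) does hold, so your $(b_{i,j})$ is indeed array-indiscernible; what fails is the reverse implication, and that is exactly what the $\EM$-basing argument needs.

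To see the failure explicitly: take any $\phi(x_1,x_2)\in\EM_\ar((a_{i,j})/C)$ with associated array-qftp ``$\nu_1<_1\nu_2$''. For $\phi$ to lie in $\EM_{\str_0}((a'_\eta)/C)$ with the corresponding str$_0$-qftp, you would need $\models\phi(a'_{\sigma_1},a'_{\sigma_2})$ for \emph{every} pair $(\sigma_1,\sigma_2)$ with that str$_0$-qftp---in particular for $(\langle 0,0\rangle,\langle 0,1\rangle)$, i.e.\ $\models\phi(a_{0,0},a_{0,1})$, which you have no reason to expect since $(0,0)<_2(0,1)$. (The padding at other levels makes this even worse, but the problem is already present on the length-$2$ layer.) Your proposed fix via ``homogeneity of $\omega^{<\omega}$'' cannot help: the obstruction is not that str$_0$-witnesses might live off level $2$, but that even among level-$2$ tuples the str$_0$-qftp is strictly coarser than the array-qftp. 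Note also that passing to $\L_\str$ does not rescue the embedding $(i,j)\mapsto\langle i,j\rangle$, since all length-$2$ nodes have the same $<_\len$-relations. The paper instead uses the embedding $g(i,j)=\langle 0\rangle^{2i}{}^\frown\langle j+1\rangle$ into an str-tree, which encodes the row index as the \emph{length} of the node so that $<_\len$ recovers $<_1$; combined with a preliminary stretching of the array and the re-indexing lemma, this makes the $\EM_\ar$-basing go through.
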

As an application of the array modelling theorem we generalise the following from full first-order logic, completing a task from \cite[Remark 7.3]{dmitrieva_dividing_2023}.
\begin{theorem}
\thlabel{k-tp2-iff-2-tp2}
Let $T$ be a thick theory. If $\phi(x, y)$ has $k$-\TPtwo for some $k \geq 2$ then some conjunction $\bigwedge_{i = 1}^n \phi(x, y_i)$ has $2$-\TPtwo. Hence, $T$ has $k$-\TPtwo for some $k \geq 2$ iff $T$ has $2$-\TPtwo.
\end{theorem}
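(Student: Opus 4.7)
My approach is to adapt the classical first-order reduction: apply the array modelling theorem to obtain an array-indiscernible witness, then block the array so that $k$-inconsistency in each row becomes $2$-inconsistency for a suitable conjunction. Given $(a_{i,j})_{i,j<\omega}$ witnessing $k$-\TPtwo for $\phi$ (rows $k$-inconsistent, paths consistent), I apply \thref{array-modelling} with $C = \emptyset$ to obtain an array-indiscernible $(b_{i,j})_{i,j<\omega}$ that is $\EM_\ar$-based on $(a_{i,j})$. Consistency and inconsistency of finite collections of $\phi$-instances are captured by (the presence or failure of) positive existential formulas in those parameters, so both properties transfer across $\EM_\ar$-basing: the array $(b_{i,j})$ still witnesses $k$-\TPtwo, and is moreover mutually row-indiscernible with column permutation invariance. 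Taking $k$ minimal, I may further assume that every $(k-1)$-subset of a single row is $\phi$-consistent.

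Next, set $n := k-1$ and $\psi(x, y_1, \ldots, y_n) := \bigwedge_{i=1}^{n} \phi(x, y_i)$, and define $(c_{i,s})_{i,s<\omega}$ by blocks $c_{i,s} := (b_{i, ns}, b_{i, ns+1}, \ldots, b_{i, ns+n-1})$. Any two distinct cells $c_{i,s_1}, c_{i,s_2}$ in the same row together contain $2n \geq k$ distinct entries of row $i$, so by $k$-inconsistency of the row $\psi(x, c_{i,s_1}) \wedge \psi(x, c_{i,s_2})$ is inconsistent; the rows of $(c_{i,s})$ are $\psi$-$2$-inconsistent. For path consistency, given $g: \omega \to \omega$, consistency of $\{\psi(x, c_{i,g(i)}) : i<\omega\}$ amounts to consistency of a fat path of width $n$ in $(b_{i,j})$; using mutual row-indiscernibility to move each chosen column set to the initial segment $\{0, \ldots, n-1\}$, this reduces to consistency of $\{\phi(x, b_{i,t}) : i<\omega,\, t<n\}$. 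This fat-path consistency is obtained via $\EM_\ar$-basing together with minimality of $k$: any failure would transfer back to $(a_{i,j})$ and, combined with the grouped-array structure above, would contradict the minimality of $k$ by producing a $(k-1)$-\TPtwo witness for $\phi$. The second assertion of the theorem is then immediate from the first.

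The main obstacle will be the fat-path consistency step. In first-order logic this rests on a standard compactness-plus-mutual-indiscernibility argument, but in the positive setting one must carefully verify that the positive existential statement ``$\exists x \bigwedge \phi(x,-)$'' is preserved along $\EM_\ar$-basing (which is the case, being positive existential) and that the thickness of $T$ supplies enough type-definability for the compactness and minimality arguments to go through.
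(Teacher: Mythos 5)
Your overall strategy (apply the array modelling theorem, pass to an array-indiscernible witness via \thref{indiscernible-tp2}, then block columns) tracks the paper's proof closely, but there is a genuine gap in your fat-path-consistency step. You claim that, taking $k$ minimal for $\phi$, inconsistency of the fat path $\{\phi(x, b_{i,t}) : i < \omega,\, t < k-1\}$ would produce a $(k-1)$-\TPtwo witness \emph{for $\phi$} and hence contradict minimality. It would not. If the fat path is inconsistent, compactness gives $m < \omega$ with $\{\phi(x, b_{i,t}) : i < m,\, t < k-1\}$ inconsistent; grouping rows into blocks of size $m$ and setting $\phi'(x, y_0, \ldots, y_{m-1}) := \bigwedge_{j<m}\phi(x, y_j)$ yields a $(k-1)$-\TPtwo witness for $\phi'$, a conjunction of $\phi$, not for $\phi$ itself. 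The inconsistency is distributed across several rows, so it says nothing about the $(k-1)$-inconsistency of a single row of $\phi$-instances, and no contradiction with minimality of $k$ for $\phi$ follows. Minimality of $k$ therefore does not establish fat-path consistency, which is the load-bearing step of your argument.

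The repair is to drop the appeal to minimality and instead run a two-case dichotomy, proving the theorem by induction on $k$ with ``some conjunction of $\phi$ has $2$-\TPtwo'' as the conclusion for all $\phi$ simultaneously. If the width-$(k-1)$ fat path is consistent, your column-blocking gives that $\bigwedge_{i=1}^{k-1}\phi(x,y_i)$ has $2$-\TPtwo and you are done. If it is inconsistent, the row-grouping above produces a conjunction $\phi'$ with $(k-1)$-\TPtwo, and you conclude by applying the induction hypothesis to $\phi'$ (a conjunction of $\phi'$ is again a conjunction of $\phi$). With that correction your argument is sound and differs from the paper's only in the choice of dichotomy: the paper splits on the width-$2$ fat path, pairing columns to reduce $k$ to $\lceil k/2 \rceil$ and inducting in the consistent case, and grouping rows to finish directly in the inconsistent case, whereas you finish directly in the consistent case and induct in the inconsistent one.
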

\textbf{Acknowledgements.} We thank the anonymous referee for their suggestions for improvement, and in particular for suggesting a simpler version of \thref{based-on-implications-non-reversible} compared to a previous version of this paper.
\section{Preliminaries}
\label{sec:preliminaries}
We only recall the definitions and facts about positive logic that we need, for a more extensive treatment and discussion see \cite{ben-yaacov_positive_2003, ben-yaacov_fondements_2007} and for a more survey-like overview see \cite[Section 2]{dobrowolski_kim-independence_2022}.
\begin{definition}
\thlabel{positive-syntax}
A \emph{positive formula} in a fixed language is one that is obtained from combining atomic formulas using $\wedge$, $\vee$, $\top$, $\bot$ and $\exists$. An \emph{h-inductive sentence} is a sentence of the form $\forall x(\phi(x) \to \psi(x))$, where $\phi(x)$ and $\psi(x)$ are positive formulas. A \emph{positive theory} is a set of h-inductive sentences.
\end{definition}
\begin{remark}
\thlabel{morleyisation}
Full first-order logic can be studied as a special case of positive logic. This is done through \emph{Morleyisation}. For this we add a relation symbol $R_\phi(x)$ to our language for every full first-order formula $\phi(x)$, and have our theory (inductively) express that $R_\phi(x)$ and $\phi(x)$ are equivalent. This way every first-order formula is equivalent to a relation symbol, and thus in particular to a positive formula.
\end{remark}
We are interested in \emph{positively closed} models. There are various characterisations (see e.g.\ \cite[Definition 2.5]{dobrowolski_kim-independence_2022}, where they are called ``existentially closed''), but we only need one.
\begin{definition}
\thlabel{existentially-closed}
We call a model $M$ of a positive theory $T$ \emph{positively closed} or \emph{p.c.}\ if for every positive formula $\phi(x)$, whenever $M \not \models \phi(a)$ then there is a positive formula $\psi(x)$ that implies $\neg \phi(x)$ modulo $T$, i.e.\ $T \models \neg \exists (\phi(x) \wedge \psi(x))$, with $M \models \psi(a)$
\end{definition}
A \emph{positive type} will be a set of positive formulas, over some parameter set $B$, satisfied by some tuple $a$ in some p.c.\ model $M$:
\[
\tp(a/B) = \{ \phi(x, b) : \phi \text{ is a positive formula, } M \models \phi(a, b) \text{ and } b \in B \}.
\]
Given a positive theory $T$, we will work in a monster model $\MM$ of that theory. For this we need to assume the \emph{joint continuation property} or JCP for $T$. This means that for any two models $M_1$ and $M_2$ of $T$ there is a third model $N$ with homomorphisms $M_1 \to N \leftarrow M_2$. This is the positive version of working in a complete theory, and we can always extend a positive theory $T$ to one with JCP by taking the set of all h-inductive sentences that are true in some p.c.\ model of $T$.

The monster model $\MM$ of $T$ can then be constructed as usual. We let the reader fix their favourite notion of smallness (e.g., fix a big enough cardinal $\kappa$, and let ``small'' mean $< \kappa$). We recall the properties of a monster model $\MM$:
\begin{itemize}
\item \emph{positively closed}, $\MM$ is a p.c.\ model of $T$;
\item \emph{very homogeneous}, for any small $a, b, C$ we have that $\tp(a/C) = \tp(b/C)$ iff there is $f \in \Aut(\MM/C)$ with $f(a) = b$ (we will also write $a \equiv_C b$);
\item \emph{very saturated}, any finitely satisfiable small set of positive formulas $\Sigma$ over $\MM$ is satisfiable in $\MM$.
\end{itemize}
As usual, we will omit the monster model from notation. For example, we write $\models \phi(a)$ instead of $\MM \models \phi(a)$.

We stress that the monster is only saturated for sets of \emph{positive} formulas, and so we can only apply compactness to such sets. This is where the main challenges in generalising arguments from the full first-order setting stem from.
\begin{definition}
\thlabel{indiscernible-sequences}
A sequence $(a_i)_{i \in I}$ (for some linear order $I$) is $C$-indiscernible if for any $i_1 < \ldots < i_n$ and $j_1 < \ldots < j_n$ in $I$ we have that $a_{i_1} \ldots a_{i_n} \equiv_C a_{j_1} \ldots a_{j_n}$.
\end{definition}
\begin{definition}
\thlabel{lascar-distance}
We write $\d_C(a, a') \leq n$ if there are $a = a_0, a_1, \ldots, a_n = a'$ such that $a_i$ and $a_{i+1}$ are on a $C$-indiscernible sequence for all $0 \leq i < n$, and we say that $a$ and $a'$ have \emph{Lascar distance} at most $n$.
\end{definition}
\begin{definition}[{\cite[Proposition 1.5]{ben-yaacov_thickness_2003}}]
\thlabel{thickness}
A positive theory $T$ is called \emph{thick} if the following equivalent conditions hold:
\begin{enumerate}[label=(\roman*)]
\item being an indiscernible sequence is type-definable, i.e.\ there is a partial type $\Theta((x_i)_{i < \omega})$ such that $\models \Theta((a_i)_{i < \omega})$ iff $(a_i)_{i < \omega}$ is an indiscernible sequence;
\item the property $\d_z(x, y) \leq n$ is type-definable for all $n < \omega$, i.e.\ for all $n < \omega$ there is a partial type $\Sigma_n(x, y, z)$ such that $\models \Sigma_n(a, a', C)$ iff $\d_C(a, a') \leq n$.
\end{enumerate}
\end{definition}
Note that it is essential in \thref{thickness} that we evaluate the partial types in a p.c.\ model, which we indeed do by working in a monster model.

Thickness is a mild assumption, as it is satisfied by many classes of positive theories.
\begin{itemize}
\item Any full first-order theory, seen as a positive theory (\thref{morleyisation}), is thick.
\item Any continuous theory in the sense of \cite{ben-yaacov_model_2008} is Hausdorff, and therefore thick.
\item Jonsson theories, and even the positive Jonsson theories from \cite{poizat_positive_2018}, by definition have the property that any span $M_1 \leftarrow M_0 \to M_2$ of homomorphisms between models can be amalgamated. This implies that these theories are Hausdorff by \cite[Theoreme 20]{ben-yaacov_fondements_2007}), and so in particular they are thick.
\item Adding hyperimaginaries, such as the $T^\text{heq}$ construction, preserves thickness, see \cite[Theorem 10.17]{dobrowolski_kim-independence_2022}.
\item Specific examples include the positive NIP theories in \cite[Section 6.3]{dobrowolski_amalgamation_2023} and the positive theory of exponential fields from \cite{haykazyan_existentially_2021}. These are both in the class of positive Jonsson theories. Another specific example, which is not in any of the classes mentioned before, is the positive theory of bilinear spaces over a fixed infinite field from \cite{kamsma_bilinear_2023}, which is semi-Hausdorff and thus thick (see Proposition 4.14 there).
\end{itemize}
Furthermore, neostability theory, where results like the ones in this paper often find their applications, works best under the thickness assumption. For example, without thickness a stable theory may not be simple in the sense that local character fails for dividing \cite[Example 4.3]{ben-yaacov_simplicity_2003}. In \cite{dobrowolski_kim-independence_2022} thickness is crucial for the existence of Lascar-invariant types, which are the basis for Kim-dividing in positive NSOP$_1$ theories.\\
\\
\noindent
\textbf{Conventions.}
\begin{itemize}
\item Whenever we say ``formula'', ``type'' or ``theory'' we will mean ``positive formula'', ``positive type'' and ``positive theory'' respectively, unless explicitly stated otherwise. This also means that every formula, type and theory we consider will be implicitly assumed to be positive.
\item Let $I$ be some indexing set and suppose that we have an $I$-indexed set of variables $(x_i)_{i \in I}$ or parameters $(a_i)_{i \in I}$. Then for any tuple $\bar{\eta} = (\eta_1, \ldots, \eta_n)$ in $I$ we write $x_{\bar{\eta}}$ and $a_{\bar{\eta}}$ for the tuples $(x_{\eta_1}, \ldots, x_{\eta_n})$ and $(a_{\eta_1}, \ldots, a_{\eta_n})$ respectively.
\end{itemize}
\section{Generalised indiscernibles}
\label{sec:generalised-indiscernibles}
The following idea stems from \cite[Definition VII.2.4]{shelah_classification_1990}.
\begin{definition}
\thlabel{generalised-indiscernibles}
Let $\L$ be a language (which we always assume to include the equality symbol), $I$ an $\L$-structure, and $(a_i)_{i \in I}$ an $I$-indexed set of parameters. We will refer to $I$ as the \emph{indexing structure}. Let $C$ be any parameter set. We say that $(a_i)_{i \in I}$ is \emph{$I$-indiscernible over $C$} if for any two tuples $\bar{\eta}$ and $\bar{\nu}$ in $I$ we have that
\[
\qftp_\L(\bar{\eta}) = \qftp_\L(\bar{\nu}) \implies a_{\bar{\eta}} \equiv_C a_{\bar{\nu}},
\]
where $\qftp_\L(\bar{\eta})$ is the quantifier-free $\L$-type of $\bar{\eta}$.
\end{definition}
At the end of Section \ref{sec:preliminaries} we made a convention about every formula and type being positive. For the quantifier-free $\L$-types in the indexing structures this has no real effect. This is because we will only be interested in whether or not such quantifier-free $\L$-types are equal, and two tuples have the same quantifier-free $\L$-type in the full first-order sense if and only if they satisfy the same atomic $\L$-formulas. In other words, for the quantifier-free $\L$-formulas, where $\L$ is the language of some indexing structure, we may as well allow the $\neg$ symbol.
\begin{example}
\thlabel{indiscernible-sequence-as-generalised-indiscernible}
We let $\L_< = \{ < \}$ be the language with a single ordering symbol and consider $\omega$ as an $\L_<$-structure with the usual ordering. Then $(a_i)_{i < \omega}$ being $\omega$-indiscernible over $C$ means precisely that $(a_i)_{i < \omega}$ is a $C$-indiscernible sequence.
\end{example}
\begin{definition}
\thlabel{based-on}
Let $\L$ be some language and let $(a_i)_{i \in I}$ and $(b_i)_{i \in I'}$ be two sets of parameters indexed by $\L$-structures $I$ and $I'$ respectively. Let furthermore $C$ be any parameter set. We say that \emph{$(b_i)_{i \in I'}$ is $\L$-based on $(a_i)_{i \in I}$ over $C$} if for any finite tuple $\bar{\eta}$ in $I'$ there is a tuple $\bar{\nu}$ in $I$ such that $\qftp_\L(\bar{\nu}) = \qftp_\L(\bar{\eta})$ and $b_{\bar{\eta}} \equiv_C a_{\bar{\nu}}$.

We say that \emph{$(b_i)_{i \in I'}$ is locally $\L$-based on $(a_i)_{i \in I}$ over $C$} if for any finite tuple $\bar{\eta}$ in $I'$ and any formula $\phi(x_{\bar{\eta}})$ over $C$ such that $\models \phi(b_{\bar{\eta}})$ there is a tuple $\bar{\nu}$ in $I$ such that $\qftp_\L(\bar{\nu}) = \qftp_\L(\bar{\eta})$ and $\models \phi(a_{\bar{\nu}})$.
\end{definition}
We note the difference in terminology from \cite[Definition 3.8]{kim_tree_2014}, where ``based on'' is used for what we call ``locally based on'', a distinction that is further promoted in \cite[Definition 2.5]{scow_indiscernibles_2015}. The difference is important, see \thref{based-on-implications-non-reversible}. Another difference is that we call it ``(locally) $\L$-based on'' instead of ``(locally) $I$-based on'', this is because we wish to compare sets of parameters indexed by different structures in the same language.
\begin{remark}
\thlabel{locally-based-on-alternative-formulation}
An alternative formulation to $(b_i)_{i \in I'}$ being locally $\L$-based on $(a_i)_{i \in I}$ over $C$ is the following (see e.g. \cite[Definition 2.5]{scow_indiscernibles_2015}). For any finite set of formulas $\Phi$ with parameters in $C$ denote by $\tp_\Phi(a/C)$ the restriction of $\tp(a/C)$ to the formulas in $\Phi$. Then $(b_i)_{i \in I'}$ is locally $\L$-based on $(a_i)_{i \in I}$ over $C$ if for every finite set $\Phi$ of formulas over $C$ and any finite tuple $\bar{\eta}$ in $I'$ there is a finite tuple $\bar{\nu}$ in $I$ such that $\tp_\Phi(b_{\bar{\eta}}/C) = \tp_\Phi(a_{\bar{\nu}}/C)$. This is indeed an equivalent formulation, even in positive logic. It clearly implies the formulation in \thref{based-on}, so we prove the converse.

Let $\phi \in \Phi$ be some formula and let $\bar{\eta}' \subseteq \bar{\eta}$ be a tuple whose length matches the number of free variables in $\phi$. Define $\psi_{\phi, \bar{\eta}'}(x_{\bar{\eta}'})$ as follows: if $\models \phi(b_{\bar{\eta}'})$ then take $\psi_{\phi, \bar{\eta}'}(x_{\bar{\eta}'})$ to be $\phi(x_{\bar{\eta}'})$, otherwise there is $\chi((x_i)_{i \in \bar{\eta}'})$ that implies $\neg \phi(x_{\bar{\eta}'})$ modulo $T$ such that $\models \chi(b_{\bar{\eta}'})$ and we take $\psi_{\phi, \bar{\eta}'}(x_{\bar{\eta}'})$ to be $\chi(x_{\bar{\eta}'})$. Now let
\[
\psi(x_{\bar{\eta}}) = \bigwedge \{ \psi_{\phi, \bar{\eta}'}(x_{\bar{\eta}'}) : \phi \in \Phi \text{ and } \bar{\eta}' \subseteq \bar{\eta} \text{ matching the free variables in } \phi \}.
\]
By construction $\models \psi(b_{\bar{\eta}})$ and so there is $\bar{\nu}$ in $I$ such that $\models \psi(a_{\bar{\nu}})$, from which it follows that $\tp_\Phi(b_{\bar{\eta}}/C) = \tp_\Phi(a_{\bar{\nu}}/C)$.
\end{remark}
\begin{example}
\thlabel{indiscernible-sequence-basing-on}
We recall the following fact for finding indiscernible sequences \cite[Lemma 1.2]{ben-yaacov_simplicity_2003}. Let $C$ be any parameter set and let $\kappa$ be any cardinal, and set $\lambda = \beth_{(2^{|T|+|C|+\kappa})^+}$. Then for any sequence $(a_i)_{i < \lambda}$ of $\kappa$-tuples there is a $C$-indiscernible sequence $(b_i)_{i < \omega}$ such that for all $n < \omega$ there are $i_1 < \ldots < i_n < \lambda$ with $b_1 \ldots b_n \equiv_C a_{i_1} \ldots a_{i_n}$.

With $C$, $\kappa$ and $\lambda$ as above, this is exactly saying that for any sequence $(a_i)_{i < \lambda}$ there is $C$-indiscernible $(b_i)_{i < \omega}$ that is $\L_<$-based on $(a_i)_{i < \lambda}$ over $C$.
\end{example}
\begin{definition}
\thlabel{em-type}
Let $I$ be a structure in some language $\L$, let $(a_i)_{i \in I}$ be an $I$-indexed set of parameters and let $C$ be some set of parameters. We write
\begin{align*}
\EM_\L((a_i)_{i \in I} / C) = \{ \phi(x_{\bar{\eta}}) \ :\ &\phi(x_{\bar{\eta}}) \text{ is a formula with parameters in } C \text{ and }\\
&\models \phi(a_{\bar{\nu}}) \text{ for all } \bar{\nu} \text{ with } \qftp_\L(\bar{\nu}) = \qftp_\L(\bar{\eta}) \}.
\end{align*}
for the \emph{$\EM_\L$-type of $(a_i)_{i \in I}$ over $C$}, which is a partial type in variables $(x_i)_{i \in I}$. For $\phi(x_{\bar{\eta}}) \in \EM_\L((a_i)_{i \in I}/C)$ we call $\qftp_\L(\bar{\eta})$ the \emph{associated quantifier-free $\L$-type}.

Let $I'$ be a second $\L$-structure. We say that \emph{$(b_i)_{i \in I'}$ is $\EM_\L$-based on $(a_i)_{i \in I}$ over $C$} if the following holds: for any $\phi \in \EM_\L((a_i)_{i \in I}/C)$ with associated quantifier-free $\L$-type $\Delta$, and any tuple $\bar{\eta}$ in $I'$ realising $\Delta$ we have that $\models \phi(b_{\bar{\eta}})$.
\end{definition}
The name ``EM-type'' is short for ``Ehrenfeucht-Mostowski type'' and the above is a straightforward generalisation from the traditional case where the indexing structure is a linear order (see e.g.\ \cite[Definition 5.1.2]{tent_course_2012}).

It could be the case that $(b_i)_{i \in I'}$ is (locally) $\L$-based or $\EM_\L$-based on $(a_i)_{i \in I}$ because $I$ does not realise a quantifier-free $\L$-type that is realised in $I'$, or vice versa. For example, for any sequence $(a_i)_{i < \omega}$ we have that $a_0$ (as a singleton indexed set) is $\L_<$-based on $(a_i)_{i < \omega}$. To circumvent this issue we introduce the following definition, which will be satisfied by all our indexing structures of interest.
\begin{definition}
\thlabel{qftp-comparable}
Two $\L$-structures $I$ and $I'$ are \emph{qftp-comparable} if $I'$ realises every quantifier-free $\L$-type in finitely many variables that is realised in $I$, and vice versa.
\end{definition}
\begin{remark}
\thlabel{basing-transitive}
Let $I$, $I'$ and $I''$ be pairwise qftp-comparable $\L$-structures. Suppose that $(d_i)_{i \in I''}$ is $\L$-based on $(b_i)_{i \in I'}$ over some parameter set $C$ and $(b_i)_{i \in I'}$ is $\L$-based on $(a_i)_{i \in I}$ over $C$. Then $(d_i)_{i \in I''}$ is $\L$-based on $(a_i)_{i \in I}$ over $C$. We call this ``transitivity of $\L$-basing'', and similarly for locally $\L$-basing and $\EM_\L$-basing.
\end{remark}
\begin{remark}
\thlabel{em-basing-alternative}
Let $I$ and $I'$ be two qftp-comparable $\L$-structures. Then we can equivalently be phrase the definition of being $\EM_\L$-based as follows. Define
\begin{align*}
\Sigma((x_i)_{i \in I'}) = \{ \phi(x_{\bar{\eta}}) \ :\ &\bar{\eta} \text{ is a finite tuple in } I' \text{ and}\\
&\text{for some tuple } \bar{\nu} \text{ in } I \text{ with } \qftp_\L(\bar{\nu}) = \qftp_\L(\bar{\eta})\\
&\text{we have that } \phi(x_{\bar{\nu}}) \in \EM_\L((a_i)_{i \in I}/C) \}.
\end{align*}
Then $(b_i)_{i \in I'}$ is $\EM_\L$-based on $(a_i)_{i \in I}$ over $C$ iff $\Sigma((x_i)_{i \in I'}) \subseteq \EM_\L((b_i)_{i \in I'}/C)$ iff $\models \Sigma((b_i)_{i \in I'})$. Note that when $I = I'$ then $\Sigma((x_i)_{i \in I}) = \EM_\L((a_i)_{i \in I}/C)$, so in that case we get that $(b_i)_{i \in I}$ is $\EM_\L$-based on $(a_i)_{i \in I}$ over $C$ iff $\EM_\L((a_i)_{i \in I}/C) \subseteq \EM_\L((b_i)_{i \in I}/C)$ iff $(b_i)_{i \in I} \models \EM_\L((a_i)_{i \in I}/C)$.
\end{remark}
\begin{proposition}
\thlabel{based-on-implies-locally-based-on-implies-em-based-on}
Let $(a_i)_{i \in I}$ and $(b_i)_{i \in I'}$ be sets of parameters indexed by qftp-comparable $\L$-structures $I$ and $I'$ respectively. For any parameter set $C$ we have that (i) $\Rightarrow$ (ii) $\Rightarrow$ (iii) as below:
\begin{enumerate}[label=(\roman*)]
\item $(b_i)_{i \in I'}$ is $\L$-based on $(a_i)_{i \in I}$ over $C$,
\item $(b_i)_{i \in I'}$ is locally $\L$-based on $(a_i)_{i \in I}$ over $C$,
\item $(b_i)_{i \in I'}$ is $\EM_\L$-based on $(a_i)_{i \in I}$ over $C$.
\end{enumerate}
\end{proposition}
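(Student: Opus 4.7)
The implication (i) $\Rightarrow$ (ii) will be a direct unfolding of the definitions, while (ii) $\Rightarrow$ (iii) will use the existential closure of the monster to convert the negation of a positive formula (which we cannot write directly in positive logic) into a positive witness that can then be fed into (ii).

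For (i) $\Rightarrow$ (ii), I fix a finite tuple $\bar\eta$ in $I'$ and a formula $\phi(x_{\bar\eta})$ over $C$ with $\models\phi(b_{\bar\eta})$. By (i) there is $\bar\nu$ in $I$ with $\qftp_\L(\bar\nu)=\qftp_\L(\bar\eta)$ and $b_{\bar\eta}\equiv_C a_{\bar\nu}$; since $\phi$ uses only parameters from $C$, the latter immediately gives $\models\phi(a_{\bar\nu})$, as required by (ii).

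For (ii) $\Rightarrow$ (iii), let $\phi\in\EM_\L((a_i)_{i\in I}/C)$ with associated quantifier-free $\L$-type $\Delta$, and let $\bar\eta$ in $I'$ realise $\Delta$. I will argue by contradiction, assuming $\not\models\phi(b_{\bar\eta})$. Writing $\phi$ as $\phi(x_{\bar\eta},c)$ where $c$ enumerates its parameters from $C$, \thref{existentially-closed} applied to the tuple $(b_{\bar\eta},c)$ yields a positive formula $\psi(x_{\bar\eta},z)$ with $\models\psi(b_{\bar\eta},c)$ and $T\models\neg\exists(\phi(x_{\bar\eta},z)\wedge\psi(x_{\bar\eta},z))$. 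Then $\psi(x_{\bar\eta},c)$ is a formula over $C$ satisfied by $b_{\bar\eta}$, so (ii) produces a tuple $\bar\nu$ in $I$ with $\qftp_\L(\bar\nu)=\qftp_\L(\bar\eta)=\Delta$ and $\models\psi(a_{\bar\nu},c)$. The incompatibility of $\phi$ and $\psi$ modulo $T$ then forces $\not\models\phi(a_{\bar\nu},c)$, contradicting the fact that $\phi\in\EM_\L((a_i)_{i\in I}/C)$ has associated quantifier-free type $\Delta=\qftp_\L(\bar\nu)$.

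The only non-routine step is the appeal to existential closure in the second implication; this is the positive-logic substitute for simply taking $\neg\phi$ in the classical first-order argument, and is the only genuine (and still very mild) obstacle in an otherwise essentially definitional proof.
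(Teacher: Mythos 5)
Your proof is correct and follows essentially the same route as the paper's. For (i) $\Rightarrow$ (ii) both simply unfold the definitions, and for (ii) $\Rightarrow$ (iii) both pick $\bar\eta$ realising the associated quantifier-free type (using qftp-comparability), assume for contradiction $\not\models\phi(b_{\bar\eta})$, invoke existential closedness of the monster to produce a positive $\psi$ over $C$ incompatible with $\phi$ modulo $T$, push it through local basing to get $\models\psi(a_{\bar\nu})$, and derive the contradiction $\not\models\phi(a_{\bar\nu})$. Your extra care in separating the parameters $c$ of $\phi$ into a variable $z$ before appealing to \thref{existentially-closed} makes the step slightly more explicit than the paper's phrasing, but the argument is identical.
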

\begin{proof}
The implication (i) $\implies$ (ii) is clear (even without the assumption of being qftp-comparable), we prove (ii) $\implies$ (iii). So let $\phi \in \EM_\L((a_i)_{i \in I}/C)$ and let $\Delta$ be the associated quantifier-free $\L$-type. Let $\bar{\eta}$ be a tuple in $I'$ satisfying $\Delta$, which exists by qftp-comparability. Suppose for a contradiction that $\not \models \phi(b_{\bar{\eta}})$. Then there is $\psi(\bar{x})$ that implies $\neg \phi(\bar{x})$ modulo $T$ such that $\models \psi(b_{\bar{\eta}})$. As $(b_i)_{i \in I'}$ is locally $\L$-based on $(a_i)_{i \in I}$ over $C$ there must then be $\bar{\nu}$ with $\qftp_\L(\bar{\nu}) = \qftp_\L(\bar{\eta})$ such that $\models \psi(a_{\bar{\nu}})$. However, that means that $\not \models \phi(a_{\bar{\nu}})$, contradicting $\phi \in \EM_\L((a_i)_{i \in I}/C)$.
\end{proof}
\begin{example}
\thlabel{based-on-implications-non-reversible}
None of the implications in \thref{based-on-implies-locally-based-on-implies-em-based-on} are reversible, not even assuming thickness. As an example we will consider a language with countably many constants $(c_i)_{i < \omega}$ and the structure $M$ consisting of only the constants, which are all interpreted as distinct elements.

First we consider the full first-order theory of $M$ and work in a monster model of this theory (in the full first-order sense). This theory has quantifier elimination, because every type is determined by its quantifier-free part. Let $(a_i)_{i < \omega}$ be a sequence of distinct elements in the monster that are not equal to any of the constant symbols. Then $(a_i)_{i < \omega}$ is locally $\L_<$-based on the sequence $(c_i)_{i < \omega}$ that enumerates the constants, but it is not $\L_<$-based on this sequence (in both cases over $\emptyset$).

Next we consider the positive theory $T$ of $M$ (i.e.\ all h-inductive sentences true in $M$). So $T$ just expresses that all the constants are distinct, and $M$ is the only p.c.\ model of $T$ (up to isomorphism) and is thus the monster. Consider the sequence $(b_i)_{i < \omega}$ with constant value $c_0$, so $b_i = c_0$ for all $i < \omega$. Then $(b_i)_{i < \omega}$ is $\EM_{\L_<}$-based on $(c_i)_{i < \omega}$ but it is not locally $\L_<$-based on $(c_i)_{i < \omega}$ (in both cases over $\emptyset$).

Note that in the last case we really needed to consider an example in positive logic. The key is that $x_0 \neq x_1$ is not a positive formula and is thus not in the $\EM_{\L_<}$-type. In fact, as is well known, in full first-order logic we do have that (ii) and (iii) from \thref{based-on-implies-locally-based-on-implies-em-based-on} are equivalent (for qftp-comparable structures). We include a proof for completeness' sake, and to point out the usage of negation.

Let $\bar{\eta}$ be any finite tuple in $I'$ and let $\phi(x_{\bar{\eta}})$ be any formula over $C$ such that $\models \phi(b_{\bar{\eta}})$. Suppose for a contradiction that there is no tuple $\bar{\nu}$ in $I$ with $\qftp_\L(\bar{\nu}) = \qftp_\L(\bar{\eta})$ such that $\models \phi(a_{\bar{\nu}})$. Then $\models \neg \phi(a_{\bar{\nu}})$ for all such $\bar{\nu}$. Now picking one such $\bar{\nu}$, which exists by qftp-comparability, we get $\neg \phi(x_{\bar{\nu}}) \in \EM_\L((a_i)_{i \in I}/C)$. This is a contradiction, because then $\models \neg \phi(b_{\bar{\eta}})$ as $(b_i)_{i \in I'}$ is $\EM_\L$-based on $(a_i)_{i \in I}$ over $C$.
\end{example}
\begin{remark}
\thlabel{equivalent-of-em-dual-to-locally-based}
Versions of the proof of \thref{based-on-implies-locally-based-on-implies-em-based-on} and the argument at the end of \thref{based-on-implications-non-reversible} reveal an equivalent formulation of being $\EM_\L$-based, which is a negative version of being locally $\L$-based. Namely, $(b_i)_{i \in I'}$ is $\EM_\L$-based on $(a_i)_{i \in I}$ over $C$ if for any finite tuple $\bar{\eta}$ in $I'$ and any formula $\phi(x_{\bar{\eta}})$ over $C$ such that $\not \models \phi(b_{\bar{\eta}})$ there is a tuple $\bar{\nu}$ in $I$ such that $\qftp_\L(\bar{\nu}) = \qftp_\L(\bar{\eta})$ and $\not \models \phi(a_{\bar{\nu}})$.

We will continue working with the $\EM$-type perspective, because that makes it immediately clear that type-definable behaviour is captured and thus carried over.
\end{remark}
\begin{remark}
\thlabel{sequence-em-modelling}
In \thref{indiscernible-sequence-basing-on} we saw that we can base an indiscernible sequence on a sufficiently long sequence. In \thref{based-on-implications-non-reversible} we saw examples of indiscernible sequences that are $\EM_{\L_<}$-based on sequences of length $\omega$ (the sequences $(a_i)_{i < \omega}$ and $(b_i)_{i < \omega}$ there are indiscernible). This can always be done and illustrates the use of $\EM$-types.

That is, for any sequence $(a_i)_{i < \omega}$ of tuples of the same length and any parameter set $C$ there is a $C$-indiscernible sequence $(b_i)_{i < \omega}$ that is $\EM_{\L_<}$-based on $(a_i)_{i < \omega}$ over $C$. To see this we let $\lambda$ be the cardinal from \thref{indiscernible-sequence-basing-on}. Then we define
\begin{align*}
\Sigma((x_i)_{i < \lambda}) = \{ \phi(x_{i_1}, \ldots, x_{i_n}) \ :\ &\phi(x_1, \ldots, x_n) \in \EM_{\L_<}((a_i)_{i < \omega}/C)\\
&\text{and }i_1 < \ldots < i_n < \lambda \},
\end{align*}
note that this is the same construction of $\Sigma$ as in \thref{em-basing-alternative}, just specialised to $\L_<$. Let $(a'_i)_{i < \lambda}$ be any realisation of $\Sigma$, which exists by compactness because every finite part of $\Sigma$ is realised by $(a_i)_{i < \omega}$. Then use \thref{indiscernible-sequence-basing-on} to $\L_<$-base a $C$-indiscernible sequence $(b_i)_{i < \omega}$ on $(a'_i)_{i < \lambda}$. By \thref{based-on-implies-locally-based-on-implies-em-based-on} this means that $(b_i)_{i < \omega}$ is in particular $\EM_{\L_<}$-based on $(a'_i)_{i < \lambda}$ over $C$, which is in turn $\EM_{\L_<}$-based on $(a_i)_{i < \omega}$ over $C$ by construction. Since $\EM$-basing is transitive, we conclude that $(b_i)_{i < \omega}$ is as required.
\end{remark}
In the remainder of this section we provide tools to deal with carrying over indiscernibility and $\EM$-types between indexing structures in different languages. We refer to \thref{trees-examples} for examples.
\begin{definition}[{\cite[Definition 3.2]{scow_ramsey_2021}}]
\thlabel{qftp-respecting}
Let $\L$ and $\L'$ be languages and $I$ and $I'$ be structures in those respective languages. We call a function $f: I \to I'$ \emph{qftp-respecting} if for any two finite tuples $\bar{\eta}$ and $\bar{\nu}$ in $I$ we have that $\qftp_\L(\bar{\eta}) = \qftp_\L(\bar{\nu})$ implies $\qftp_{\L'}(f(\bar{\eta})) = \qftp_{\L'}(f(\bar{\nu}))$.
\end{definition}
\begin{lemma}[Re-indexing lemma]
\thlabel{reindexing-lemma}
Let $f: I \to I'$ be a qftp-respecting function between structures $I$ and $I'$ in languages $\L$ and $\L'$ respectively. Let $C$ be any parameter set.
\begin{enumerate}[label=(\roman*)]
\item Let $(a'_i)_{i \in I'}$ and $(a_i)_{i \in I}$ be such that $a_i = a'_{f(i)}$ for all $i \in I$. If $(a'_i)_{i \in I'}$ is $I'$-indiscernible over $C$ then $(a_i)_{i \in I}$ is $I$-indiscernible over $C$.
\item Let $(a'_i)_{i \in I'}$, $(b'_i)_{i \in I'}$, $(a_i)_{i \in I}$ and $(b_i)_{i \in I}$ be such that $a_i = a'_{f(i)}$ and $b_i = b'_{f(i)}$ for all $i \in I$. If $f$ is surjective and $(b_i)_{i \in I}$ is $\EM_\L$-based on $(a_i)_{i \in I}$ over $C$ then $(b'_i)_{i \in I'}$ is $\EM_{\L'}$-based on $(a'_i)_{i \in I'}$ over $C$.
\item Suppose that there is $g: J \to I$, where $J$ is an $\L'$-structure, such that $fg$ is an $\L'$-embedding. Let $(a'_i)_{i \in I'}$, $(b_i)_{i \in I}$, $(a_i)_{i \in I}$ and $(b'_j)_{j \in J}$ be such that $a_i = a'_{f(i)}$ and $b'_{j} = b_{g(j)}$ for all $i \in I$ and $j \in J$. If $(b_i)_{i \in I}$ is $\EM_\L$-based on $(a_i)_{i \in I}$ over $C$ then $(b'_j)_{j \in J}$ is $\EM_{\L'}$-based on $(a'_i)_{i \in I'}$ over $C$.
\end{enumerate}
\end{lemma}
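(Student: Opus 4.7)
The plan is to translate tuples between $I$ and $I'$ (and, in (iii), between $J$ and $I$) via $f$ and $g$, using the qftp-respecting property to keep track of how quantifier-free types move, and then to invoke the relevant indiscernibility or $\EM_\L$-basing hypothesis on the source side. For (i), the argument is essentially immediate: given $\bar{\eta}, \bar{\nu}$ in $I$ with $\qftp_\L(\bar{\eta}) = \qftp_\L(\bar{\nu})$, qftp-respectingness of $f$ yields $\qftp_{\L'}(f(\bar{\eta})) = \qftp_{\L'}(f(\bar{\nu}))$, and $I'$-indiscernibility of $(a'_i)_{i \in I'}$ together with the identifications $a_{\bar{\eta}} = a'_{f(\bar{\eta})}$ and $a_{\bar{\nu}} = a'_{f(\bar{\nu})}$ give $a_{\bar{\eta}} \equiv_C a_{\bar{\nu}}$.

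For (ii), I would fix $\phi(x_{\bar{\eta}'}) \in \EM_{\L'}((a'_i)_{i \in I'}/C)$ and a tuple $\bar{\nu}'$ in $I'$ with $\qftp_{\L'}(\bar{\nu}') = \qftp_{\L'}(\bar{\eta}')$, and use surjectivity to lift $\bar{\nu}'$ to a tuple $\bar{\nu}$ in $I$ with $f(\bar{\nu}) = \bar{\nu}'$. The key claim is then that $\phi(x_{\bar{\nu}}) \in \EM_\L((a_i)_{i \in I}/C)$: for any $\bar{\mu}$ in $I$ with $\qftp_\L(\bar{\mu}) = \qftp_\L(\bar{\nu})$, qftp-respectingness of $f$ gives $\qftp_{\L'}(f(\bar{\mu})) = \qftp_{\L'}(f(\bar{\nu})) = \qftp_{\L'}(\bar{\eta}')$, whence $\models \phi(a'_{f(\bar{\mu})}) = \phi(a_{\bar{\mu}})$ by membership of $\phi$ in the $\EM_{\L'}$-type. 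Applying $\EM_\L$-basing of $(b_i)_{i \in I}$ on $(a_i)_{i \in I}$ gives $\models \phi(b_{\bar{\nu}}) = \phi(b'_{\bar{\nu}'})$, as required. For (iii), the strategy is essentially the same but uses the $\L'$-embedding $fg$ in place of surjectivity: given $\phi(x_{\bar{\eta}'}) \in \EM_{\L'}((a'_i)_{i \in I'}/C)$ and $\bar{\nu}$ in $J$ with $\qftp_{\L'}(\bar{\nu}) = \qftp_{\L'}(\bar{\eta}')$, I would check that $\phi(x_{g(\bar{\nu})}) \in \EM_\L((a_i)_{i \in I}/C)$ by noting that for any $\bar{\mu}$ in $I$ with $\qftp_\L(\bar{\mu}) = \qftp_\L(g(\bar{\nu}))$, qftp-respectingness of $f$ combined with $fg$ being an $\L'$-embedding gives $\qftp_{\L'}(f(\bar{\mu})) = \qftp_{\L'}(fg(\bar{\nu})) = \qftp_{\L'}(\bar{\nu}) = \qftp_{\L'}(\bar{\eta}')$, so $\models \phi(a_{\bar{\mu}})$; then $\EM_\L$-basing together with $b'_{\bar{\nu}} = b_{g(\bar{\nu})}$ finishes the argument.

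There is no genuine mathematical obstacle here; the content is a definition chase once the correct tuple has been lifted or pushed forward. The only thing to be careful about is notational bookkeeping, in particular keeping the three identifications $a_i = a'_{f(i)}$, $b_i = b'_{f(i)}$ and $b'_j = b_{g(j)}$ straight, and ensuring that at each stage the tuple for which we verify membership in the $\EM_\L$-type has a well-defined associated quantifier-free $\L$-type that matches the one forced on us by the hypothesis.
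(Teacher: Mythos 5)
Your proposal follows the same direct definition-unwinding approach as the paper: in each part you lift or push tuples along $f$ (and $g$), use the qftp-respecting property to track quantifier-free types, verify membership of the relevant formula in the source $\EM_\L$-type, and then apply the hypothesis. The modest relabelling of tuples aside, this is exactly the paper's argument, and it is correct.
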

\begin{proof}
~
\begin{enumerate}[label=(\roman*)]
\item Let $\bar{\eta}$ and $\bar{\nu}$ be tuples in $I$ such that $\qftp_\L(\bar{\eta}) = \qftp_\L(\bar{\nu})$ then $\qftp_{\L'}(f(\bar{\eta})) = \qftp_{\L'}(f(\bar{\nu}))$ because $f$ is qftp-respecting. Hence, by $I'$-indiscernibility of $(a'_i)_{i \in I'}$ we have that $a_{\bar{\eta}} = a'_{f(\bar{\eta})} \equiv_C a'_{f(\bar{\nu})} = a_{\bar{\nu}}$.
\item Let $\phi \in \EM_{\L'}((a'_i)_{i \in I'}/C)$ and let $\Delta'$ be the associated quantifier-free $\L'$-type. Let $\bar{\eta}'$ be any tuple in $I'$ satisfying $\Delta'$. By surjectivity there is a tuple $\bar{\eta}$ in $I$ such that $f(\bar{\eta}) = \bar{\eta}'$. For any tuple $\bar{\nu}$ in $I$ such that $\qftp_\L(\bar{\nu}) = \qftp_\L(\bar{\eta})$ we have that $\qftp_{\L'}(f(\bar{\nu})) = \qftp_{\L'}(f(\bar{\eta})) = \Delta'$ because $f$ is qftp-respecting. So $\models \phi(a'_{f(\bar{\nu})})$, and hence $\models \phi(a_{\bar{\nu}})$. As $\bar{\nu}$ as arbitrary satisfying $\qftp_\L(\bar{\eta})$, we see that $\phi \in \EM_\L((a_i)_{i \in I}/C)$ with $\qftp_\L(\bar{\eta})$ as the associated quantifier-free $\L$-type. Since $(b_i)_{i \in I}$ is $\EM_\L$-based (over $C$) on $(a_i)_{i \in I}$, we get that $\models \phi(b_{\bar{\eta}})$ and hence $\models \phi(b'_{\bar{\eta}'})$. As $\bar{\eta}'$ was arbitrary satisfying $\Delta'$ we conclude that $\phi \in \EM_{\L'}((b'_i)_{i \in I'}/C)$, as required.
\item Let $\phi \in \EM_{\L'}((a'_i)_{i \in I'}/C)$ and let $\Delta'$ be the associated quantifier-free $\L'$-type. Let $\bar{\eta}'$ be any tuple in $J$ satisfying $\Delta'$. Let $\bar{\nu}$ be any tuple in $I$ satisfying $\qftp_\L(g(\bar{\eta}'))$. Then $\qftp_{\L'}(f(\bar{\nu})) = \qftp_{\L'}(fg(\bar{\eta}')) = \Delta'$ because $f$ is qftp-respecting and because of our assumption on $fg$. We thus have that $\models \phi(a'_{f(\bar{\nu})})$ and so $\models \phi(a_{\bar{\nu}})$. As $\bar{\nu}$ was arbitrary satisfying $\qftp_\L(g(\bar{\eta}'))$ and such $\bar{\nu}$ exists (take $\bar{\nu} = g(\bar{\eta}')$) we get that $\phi \in \EM_\L((a_i)_{i \in I}/C)$ with $\qftp_\L(g(\bar{\eta}'))$ as the associated quantifier-free $\L$-type. Since $(b_i)_{i \in I}$ is $\EM_\L$-based on $(a_i)_{i \in I}$ over $C$, we get that $\models \phi(b_{g(\bar{\eta}')})$ and hence $\models \phi(b'_{\bar{\eta}'})$. As $\bar{\eta}'$ was arbitrary satisfying $\Delta'$ we conclude that $\phi \in \EM_{\L'}((b'_j)_{j \in J}/C)$, as required.
\end{enumerate}
\end{proof}
\section{The tree modelling theorems}
\begin{definition}[{\cite[Definition 2.1]{kim_tree_2014}}]
\thlabel{tree-language}
For any ordinals $\alpha$ and $\beta$ we view the set $\alpha^{<\beta}$ of functions $\eta: \gamma \to \alpha$ with $\gamma < \beta$ as a tree with the usual partial ordering: $\eta \unlhd \nu$ iff $\nu$ is an extension of $\eta$. We will simultaneously view a function $\eta: \gamma \to \alpha$ as a sequence of elements in $\alpha$ of length $\gamma$. We put further structure on $\alpha^{<\beta}$ as follows, where $\eta, \nu \in \alpha^{<\beta}$:
\begin{itemize}
\item we write $\eta \wedge \nu$ for the \emph{meet} of $\eta$ and $\nu$ with respect to $\unlhd$, i.e.\ the largest initial segment shared by $\eta$ and $\nu$;
\item we write $\eta <_\lex \nu$ for the \emph{lexicographical ordering}, i.e.\ either $\eta \lhd \nu$ or $\eta$ and $\nu$ are incomparable with respect to $\unlhd$ and for the least ordinal $\gamma$ such that $\eta(\gamma) \neq \nu(\gamma)$ we have that $\eta(\gamma) < \nu(\gamma)$;
\item we write $\ell(\eta)$ for the \emph{length} or \emph{level} of $\eta$, i.e.\ $\ell(\eta)$ is the domain of $\eta$;
\item we write $\eta <_\len \nu$ iff $\ell(\eta) < \ell(\nu)$;
\item for $\gamma < \beta$ we let $P_\gamma = \{ \eta \in \alpha^{<\beta} : \ell(\eta) = \gamma \}$.
\end{itemize}
Based on the above we put different structures on $\alpha^{<\beta}$ using the following languages.
\begin{itemize}
\item the \emph{Shelah language} $\L_\s = \{ \unlhd, \wedge, <_\lex, (P_\gamma)_{\gamma < \beta} \}$,
\item the \emph{strong Shelah language} $\L_\str = \{ \unlhd, \wedge, <_\lex, <_\len \}$,
\item the language $\L_{\str_0} = \{ \unlhd, \wedge, <_\lex \}$.
\end{itemize}
To abbreviate notation we will write ``$\EM_\s$'' and ``s-based'' for ``$\EM_{\L_\s}$'' and ``$\L_\s$-based''. Whenever we consider a tree $\alpha^{<\beta}$ as an $\L_\s$-structure we will call it an \emph{s-tree} and write ``s-indiscernible'' instead of ``$\alpha^{<\beta}$-indiscernible''. For any two tuples $\bar{\eta}$ and $\bar{\nu}$ in $\alpha^{<\beta}$ we will write $\bar{\eta} \equivsqf \bar{\nu}$ for $\qftp_{\L_\s}(\bar{\eta}) = \qftp_{\L_\s}(\bar{\nu})$.

We abbreviate notation involving $\L_\str$ and $\L_{\str_0}$ in a similar way, replacing the ``s'' in the above by ``str'' or ``str$_0$'' respectively.
\end{definition}
\begin{definition}
\thlabel{notation-for-tree-nodes}
For $\gamma_0, \ldots, \gamma_{n-1} \in \alpha$ we write $\langle \gamma_0, \ldots, \gamma_{n-1} \rangle$ for the function $i \mapsto \gamma_i$, which is an element of $\alpha^{<\beta}$. For $\eta, \nu \in \alpha^{<\beta}$ we write $\eta^\frown \nu$ for the concatenation of $\eta$ and $\nu$. Formally:
\[
(\eta^\frown \nu)(i) = \begin{cases}
\eta(i) & \text{if } i < \ell(\eta) \\
\nu(j) & \text{if } i = \ell(\eta) + j
\end{cases}
\]
For $n < \omega$ we will write $\eta^n$ for the concatenation of $\eta$ with itself $n$ times:
\[
\eta^n = \underbrace{\eta^\frown \eta^\frown \ldots {}^\frown \eta}_{n \text{ times}}
\]
So $\langle 0 \rangle^n$ is the sequence of $n$ zeroes, or formally: the constant function $n \to \{0\}$.
\end{definition}
\begin{example}
\thlabel{trees-examples}
We apply the re-indexing lemma (\thref{reindexing-lemma}) to the different tree structures.
\begin{enumerate}[label=(\roman*)]
\item Consider the identity function $f: \omega^{<\omega} \to \omega^{<\omega}$, where the domain carries the structure of an str-tree and the codomain that of an str$_0$-tree. Then $f$ is just a reduct of structures, and hence qftp-respecting. By \thref{reindexing-lemma}(i) we thus see that if a tree $(a_\eta)_{\eta \in \omega^{<\omega}}$ is str$_0$-indiscernible over $C$ then it is also str-indiscernible over $C$. Similarly, we could consider the domain and codomain of $f$ to be an s-tree and str-tree respectively instead. Now $f$ is not simply a reduct, but it is still qftp-respecting as the predicates $(P_n)_{n < \omega}$ determine the relation $<_\len$. So if $(a_\eta)_{\eta \in \omega^{<\omega}}$ is str-indiscernible over $C$ then it is s-indiscernible over $C$.
\item Like in the previous point, we consider the identity function $f: \omega^{<\omega} \to \omega^{<\omega}$ as a qftp-respecting function between an str-structure and str$_0$-structure, or between an s-structure and an str-structure. We get the following from \thref{reindexing-lemma}(ii): if $(b_\eta)_{\eta \in \omega^{<\omega}}$ is $\EM_\s$-based on $(a_\eta)_{\eta \in \omega^{<\omega}}$ over $C$ then it is also $\EM_\str$-based on $(a_\eta)_{\eta \in \omega^{<\omega}}$ over $C$, and if $(b_\eta)_{\eta \in \omega^{<\omega}}$ is $\EM_\str$-based on $(a_\eta)_{\eta \in \omega^{<\omega}}$ over $C$ then it is also $\EM_{\str_0}$-based on $(a_\eta)_{\eta \in \omega^{<\omega}}$ over $C$.
\item Let $L \subseteq \omega$ be an infinite set, and enumerate $L$ in order as $\ell_0 < \ell_1 < \ldots$. For $\eta \in \omega^{<\omega}$ of length $n$ we define $\nu_\eta \in \omega^{<\omega}$ of length $\ell_n$ as:
\[
\nu_\eta(k) = \begin{cases}
\eta(i) & \text{if } k = \ell_{i+1} - 1 \\
0 & \text{otherwise}
\end{cases}
\]
Define $f_L: \omega^{<\omega} \to \omega^{<\omega}$ as $f_L(\eta) = \nu_\eta$. The picture to keep in mind here is that of including one tree into another by only having the levels in $L$ taking non-zero values. If we consider the domain and codomain of $f_L$ as s-trees then $f_L$ is qftp-respecting. We write $\omega^{<\omega} {\upharpoonright_L}$ for the image of $f_L$ with the induced s-structure, and call this \emph{the restriction of $\omega^{<\omega}$ to levels $L$}. If $(a_\eta)_{\eta \in \omega^{<\omega}}$ is s-indiscernible over $C$ then so is $(a_\eta)_{\eta \in \omega^{<\omega} {\upharpoonright_L}}$. This construction works in the exact same way for str-trees and str$_0$-trees.
\item Fix some $\eta \in \omega^{<\omega}$ and consider $f_\eta: \omega \to \omega^{<\omega}, i \mapsto \eta^\frown \langle i \rangle$. So $f$ injects the linear order $\omega$ into the tree by sending it to the immediate successors of $\eta$. We consider $\omega$ as the usual $\L_<$-structure and $\omega^{<\omega}$ as an s-structure. Then $f_\eta$ is qftp-respecting. So if $(a_\eta)_{\eta \in \omega^{<\omega}}$ is s-indiscernible over $C$ then $(a'_i)_{i < \omega}$ defined by $a'_i = a_{f_\eta(i)}$ is a $C$-indiscernible sequence, by \thref{reindexing-lemma}(i).
\end{enumerate}
\end{example}
The following notation will be useful in proofs. We define it for finite tuples, but it would make sense for infinite tuples.
\begin{definition}
\thlabel{tree-tuple-extra-notation}
Let $\bar{\eta} = (\eta_1, \ldots, \eta_n)$ be a tuple in a tree $\alpha^{<\beta}$. We define:
\begin{itemize}
\item $\ell(\bar{\eta}) = \{ \ell(\eta_i) : 1 \leq i \leq n \}$ to be the set of levels of the elements in $\bar{\eta}$,
\item $\cl_\wedge(\bar{\eta})$ to be the closure of $\bar{\eta}$ under meets.
\end{itemize}
\end{definition}
The thickness assumption in the main theorems is needed to make the various forms of indiscernibility type-definable, which we make precise below in \thref{s-str-str0-indiscernible-type-definable}. We also rely on a result from \cite{dobrowolski_kim-independence_2022} (see \thref{s-modelling}), whose proof heavily relies on s-indiscernibility being type-definable. While we do not exclude the possibility of achieving the same results without the thickness assumption, it seems unlikely. Meanwhile, the discussion after \thref{thickness} shows the relevance of our results, even with the thickness assumption.
\begin{proposition}
\thlabel{s-str-str0-indiscernible-type-definable}
Let $T$ be a thick theory. The properties of being s-indiscernible, being str-indiscernible and being str$_0$-indiscernible are type-definable. This is done by taking the partial type that states that any two tuples of variables, whose tuples of indices have the same quantifier-free type (in the relevant language), have Lascar distance at most $2$.

More precisely, define the partial type $\pi_\s((x_\eta)_{\eta \in \omega^{<\omega}}, y)$ to be
\[
\bigcup\{ \d_y(x_{\bar{\eta}}, x_{\bar{\nu}}) \leq 2 : \bar{\eta}, \bar{\nu} \text{ are finite tuples in } \omega^{<\omega} \text{ with } \bar{\eta} \equivsqf \bar{\nu} \},
\]
define $\pi_\str((x_\eta)_{\eta \in \omega^{<\omega}}, y)$ to be
\[
\bigcup\{ \d_y(x_{\bar{\eta}}, x_{\bar{\nu}}) \leq 2 : \bar{\eta}, \bar{\nu} \text{ are finite tuples in } \omega^{<\omega} \text{ with } \bar{\eta} \equivstrqf \bar{\nu} \},
\]
and define $\pi_{\str_0}((x_\eta)_{\eta \in \omega^{<\omega}}, y)$ to be
\[
\bigcup\{ \d_y(x_{\bar{\eta}}, x_{\bar{\nu}}) \leq 2 : \bar{\eta}, \bar{\nu} \text{ are finite tuples in } \omega^{<\omega} \text{ with } \bar{\eta} \equivstrOqf \bar{\nu} \}.
\]
Then for any parameter set $C$ we have that $\models \pi_\s((a_\eta)_{\eta \in \omega^{<\omega}}, C)$ iff $(a_\eta)_{\eta \in \omega^{<\omega}}$ is s-indiscernible over $C$, and similarly for $\pi_\str$ and $\pi_{\str_0}$ and being str-indiscernible and being str$_0$-indiscernible over $C$.
\end{proposition}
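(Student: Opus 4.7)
The type $\pi_\s$ is a legitimate partial type by condition (ii) of \thref{thickness}, which makes each clause $\d_y(x_{\bar\eta}, x_{\bar\nu}) \leq 2$ type-definable; the same applies to $\pi_\str$ and $\pi_{\str_0}$. What needs argument is the biconditional. For the backward direction (satisfying $\pi_\s$ implies s-indiscernibility over $C$), suppose $\d_C(a_{\bar\eta}, a_{\bar\nu}) \leq 2$ whenever $\bar\eta \equivsqf \bar\nu$. By definition of Lascar distance, $a_{\bar\eta}$ and $a_{\bar\nu}$ are linked by a length-$2$ chain of $C$-indiscernible sequences; each link gives $\equiv_C$, so $a_{\bar\eta} \equiv_C a_{\bar\nu}$, which is exactly what s-indiscernibility requires.

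For the forward direction, assume $(a_\eta)_{\eta \in \omega^{<\omega}}$ is s-indiscernible over $C$ and fix $\bar\eta \equivsqf \bar\nu$. My plan is to exhibit $\bar\xi \in \omega^{<\omega}$ together with two $\omega$-sequences of tuples witnessing $\d_C(a_{\bar\eta}, a_{\bar\xi}) \leq 1$ and $\d_C(a_{\bar\xi}, a_{\bar\nu}) \leq 1$. The key tool is the shift $\sigma_M\colon \omega^{<\omega} \to \omega^{<\omega}$ that adds $M$ to the first entry of each non-empty sequence (and fixes the root). A direct check shows $\sigma_M$ is an $\L_\s$-embedding: it preserves $\unlhd$, $\wedge$, $<_\lex$ and every $P_\gamma$ (and in fact $<_\len$ as well, so the same construction will apply to the str and str$_0$ cases). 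Pick $N$ strictly larger than every first entry appearing in $\bar\eta\bar\nu$, put $\bar\xi = \sigma_N(\bar\eta)$, and define
\[
\bar\zeta_i = \sigma_{iN}(\bar\eta) \ (i \geq 0), \qquad \bar\zeta'_0 = \bar\nu, \quad \bar\zeta'_i = \sigma_{iN}(\bar\eta) \ (i \geq 1).
\]
Each $\bar\zeta_i$ has the same qf s-type as $\bar\eta$, and via $\bar\eta \equivsqf \bar\nu$ the same holds for every $\bar\zeta'_i$. For $i < j$, the first entries of $\bar\zeta_i$ lie in $[iN,(i+1)N)$ and those of $\bar\zeta_j$ in $[jN,(j+1)N)$, so the two tuples occupy pairwise disjoint top-level branches; their cross-meets are the root and the lex order of the concatenation follows the index order. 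The same holds for the primed sequence because $\bar\nu$'s first entries lie in $[0,N)$. Hence the qf s-type of $\bar\zeta_{i_1}\ldots\bar\zeta_{i_k}$, respectively $\bar\zeta'_{i_1}\ldots\bar\zeta'_{i_k}$, depends only on $k$, so both sequences are s-indiscernible sequences of tuples in the tree. By s-indiscernibility of $(a_\eta)$ they lift to $C$-indiscernible sequences in the monster containing the pairs $(a_{\bar\eta}, a_{\bar\xi})$ and $(a_{\bar\nu}, a_{\bar\xi})$ respectively, from which $\d_C(a_{\bar\eta}, a_{\bar\nu}) \leq 2$ follows.

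The $\pi_\str$ and $\pi_{\str_0}$ statements need no new ideas, because $\sigma_M$ preserves $<_\len$ and the combinatorial verification uses only pairwise meets, lex order, and levels. The main obstacle is verifying s-indiscernibility of the \emph{mixed} sequence $(\bar\zeta'_i)_{i<\omega}$, which begins with $\bar\nu$ and continues with shifts of $\bar\eta$: the internal agreement of qf-types across the sequence hinges on $\bar\eta \equivsqf \bar\nu$, while the uniform ``root-disjoint, lex-ordered by index'' cross-structure is secured by choosing $N$ large enough that $\bar\nu$ is separated from every $\sigma_{iN}(\bar\eta)$ at the top level.
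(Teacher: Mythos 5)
Your proof is correct, and it takes a genuinely different route from the paper's. The paper disposes of $\pi_\s$ by citing an external result, and proves only the $\pi_\str$ and $\pi_{\str_0}$ cases itself, using a \emph{vertical} shift: after confining the meet-closed tuples $\bar\eta, \bar\nu$ to $k^{<k}$, it sets $\chi^j_i = \langle k\rangle^{(i+1)k}{}^\frown\eta_j$, pushing the whole configuration deeper into the tree. That shift changes every node's length, hence changes the values of the level predicates $P_\gamma$, so it cannot be used for the Shelah language $\L_\s$ at all; this is precisely why the s-case is outsourced. Your shift $\sigma_M$ is \emph{horizontal}: translating the first coordinate preserves $\unlhd$, $\wedge$, $<_\lex$, $<_\len$, and crucially also every $P_\gamma$, so the identical computation handles $\L_\s$, $\L_\str$ and $\L_{\str_0}$ uniformly and the proof becomes self-contained. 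You also avoid the paper's normalisations (meet-closure, confinement to $k^{<k}$): the meet-preservation of $\sigma_M$ plus the choice of $N$ larger than all first entries makes all cross-meets equal to the root, which is enough. The trade-off is that the paper gets to reuse the published s-case directly, while you reprove it; in return you get a single argument for all three languages.

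One tiny point worth stating explicitly if you write this up: the fact that $\bar\zeta'_0 = \bar\nu$ slots into the shifted sequence with the \emph{same} cross qf-type as $\bar\zeta_0 = \bar\eta$ uses not only that $\bar\nu$'s first entries are below $N$, but also that $\nu_a = \emptyset$ iff $\eta_a = \emptyset$ and $\ell(\nu_a) = \ell(\eta_a)$ for each coordinate; both are consequences of $\bar\eta \equivsqf \bar\nu$, and you should flag that this is where the hypothesis $\bar\eta \equivsqf \bar\nu$ enters the cross-structure computation (not merely the internal one).
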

\begin{proof}
For $\pi_\s$ this is \cite[Corollary 5.7]{dobrowolski_kim-independence_2022}. We prove the case for $\pi_\str$ using a similar argument, and the case for $\pi_{\str_0}$ uses the exact same argument as for $\pi_\str$.

If $\models \pi_\str((a_\eta)_{\eta \in \omega^{<\omega}}, C)$ then for any finite tuples $\bar{\eta}$, $\bar{\nu}$ in $\omega^{<\omega}$ with $\bar{\eta} \equivstrqf \bar{\nu}$ we have that $\d_C(a_{\bar{\eta}}, a_{\bar{\nu}}) \leq 2$, and so in particular $a_{\bar{\eta}} \equiv_C a_{\bar{\nu}}$.

Conversely, suppose that $(a_\eta)_{\eta \in \omega^{<\omega}}$ is str-indiscernible over $C$. Let $\bar{\eta}$, $\bar{\nu}$ be finite tuples in $\omega^{<\omega}$ such that $\bar{\eta} \equivstrqf \bar{\nu}$. We may assume that both $\bar{\eta}$ and $\bar{\nu}$ are closed under meets. As $\bar{\eta}$ and $\bar{\nu}$ are both finite, there is some $k < \omega$ such that they are both contained in $k^{< k}$. Write $\bar{\eta} = (\eta_1, \ldots, \eta_n)$. For $i < \omega$ and $1 \leq j \leq n$ we define $\chi^j_i = \langle k \rangle^{(i+1)k} {}^\frown  \eta_j$, and write $\bar{\chi}_i = (\chi^1_i, \ldots, \chi^n_i)$. One straightforwardly verifies that $\bar{\eta}, \bar{\chi}_0, \bar{\chi}_1, \ldots$ and $\bar{\nu}, \bar{\chi}_0, \bar{\chi}_1, \ldots$ are indiscernible sequences with respect to quantifier-free $\L_\str$-formulas. Then $\d_C(a_{\bar{\eta}}, a_{\bar{\nu}}) \leq 2$ follows from str-indiscernibility of $(a_\eta)_{\eta \in \omega^{<\omega}}$.
\end{proof}
\begin{theorem}[s-modelling, {\cite[Proposition 5.8]{dobrowolski_kim-independence_2022}}]
\thlabel{s-modelling}
Let $T$ be a thick theory. Let $(a_\eta)_{\eta \in \omega^{<\omega}}$ be a tree of tuples and let $C$ be any set of parameters, then there is a tree $(b_\eta)_{\eta \in \omega^{<\omega}}$ that is s-indiscernible over $C$ and $\EM_\s$-based on $(a_\eta)_{\eta \in \omega^{<\omega}}$ over $C$.
\end{theorem}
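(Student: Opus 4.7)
The plan is the usual \textbf{stretch-and-extract} strategy: first inflate $(a_\eta)_{\eta\in\omega^{<\omega}}$ to a tree indexed by a sufficiently large $\lambda^{<\omega}$ while preserving $\EM_\s$-based-ness, and then apply an Erdős-Rado argument to extract an s-indiscernible subtree re-indexed by $\omega^{<\omega}$. For the stretching step I would pick $\lambda$ large enough for iterated Erdős-Rado on $n$-tuples (say, a suitable iterated $\beth$ depending on $|T|+|C|+\aleph_0$). The partial type $\Sigma((x_\eta)_{\eta\in\lambda^{<\omega}})$ of \thref{em-basing-alternative}, formed with respect to $(a_\eta)_{\eta\in\omega^{<\omega}}$ over $C$, is consistent by compactness: any finite subset involves only finitely many quantifier-free $\L_\s$-types, each realized in $\omega^{<\omega}$ because $\omega^{<\omega}$ and $\lambda^{<\omega}$ are qftp-comparable. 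A realisation $(a'_\eta)_{\eta\in\lambda^{<\omega}}$ is $\EM_\s$-based on $(a_\eta)_{\eta\in\omega^{<\omega}}$ over $C$ by construction.

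For the extraction step, I would enumerate the finitely-variabled quantifier-free $\L_\s$-types $\Delta$ realised in $\omega^{<\omega}$, and for each $\Delta$ and each positive formula $\phi(x_{\bar{\eta}})$ over $C$ compatible with $\Delta$ consider the two-colouring of $\Delta$-tuples in $\lambda^{<\omega}$ sending $\bar{\eta}$ to the truth value of $\phi(a'_{\bar{\eta}})$. By the standard Erdős-Rado combinatorics for s-trees (colouring level-by-level and closing under meets), find a qftp-respecting s-embedding $\omega^{<\omega}\hookrightarrow\lambda^{<\omega}$ on whose image every such colouring is monochromatic. Let $(b_\eta)_{\eta\in\omega^{<\omega}}$ be the induced subtree. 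By the re-indexing lemma (\thref{reindexing-lemma}(iii)) applied to the embedding, $(b_\eta)_{\eta\in\omega^{<\omega}}$ remains $\EM_\s$-based on $(a_\eta)_{\eta\in\omega^{<\omega}}$ over $C$, and moreover any two tuples in $(b_\eta)$ of the same quantifier-free $\L_\s$-type agree on every positive formula over $C$.

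To upgrade agreement-on-positive-formulas to full s-indiscernibility I invoke thickness: by \thref{s-str-str0-indiscernible-type-definable}, being s-indiscernible over $C$ is exactly the partial type $\pi_\s((x_\eta)_{\eta\in\omega^{<\omega}},C)$, which for each pair $\bar{\eta}\equivsqf\bar{\nu}$ demands $\d_C(x_{\bar{\eta}},x_{\bar{\nu}})\leq 2$; by thickness (in the equivalent form \thref{thickness}(ii)) the latter is type-definable. So I form the partial type $\pi_\s((y_\eta)_{\eta\in\omega^{<\omega}},C)\cup\Sigma((y_\eta)_{\eta\in\omega^{<\omega}})$ and check its consistency by compactness: any finite fragment is witnessed in the extracted $(b_\eta)$ (for $\Sigma$ by $\EM_\s$-basing; for $\pi_\s$ one stretches each pair of tuples $\bar{\eta}\equivsqf\bar{\nu}$ into an indiscernible sequence by inserting a common ``tall-stalk'' extension $\langle k\rangle^{N\cdot i}{}^\frown-$ above both, as in the proof of \thref{s-str-str0-indiscernible-type-definable}, and uses the monochromaticity produced by the Erdős-Rado step). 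A realisation of this combined type is the desired $(b_\eta)_{\eta\in\omega^{<\omega}}$.

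The main obstacle is the positive-logic aspect: unlike in full first-order logic one cannot simply colour tuples by their types and invoke Ramsey, since positive types are not closed under negation and Ramsey does not apply to infinite colourings. The two ingredients that bypass this are (a) the switch from Ramsey to Erdős-Rado on a larger tree in the extraction step, which is why we start by stretching to $\lambda^{<\omega}$ with $\lambda$ big, and (b) thickness, which makes s-indiscernibility type-definable via Lascar distance and thereby allows a final compactness argument to promote formula-by-formula monochromaticity to genuine s-indiscernibility.
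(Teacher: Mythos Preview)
The paper's own proof is a two-line citation: the finite-height case is \cite[Proposition~5.8]{dobrowolski_kim-independence_2022}, and the extension to $\omega^{<\omega}$ is immediate by compactness once s-indiscernibility is known to be type-definable (\cite[Corollary~5.7]{dobrowolski_kim-independence_2022}, restated in \thref{s-str-str0-indiscernible-type-definable}). You are attempting something much more ambitious --- a self-contained proof --- but your extraction step hand-waves exactly the content being cited: ``standard Erd\H{o}s-Rado combinatorics for s-trees (colouring level-by-level and closing under meets)'' is not a black box one can invoke; it \emph{is} \cite[Proposition~5.8]{dobrowolski_kim-independence_2022}, and carrying it out requires a delicate inductive construction coordinating the thinning at each level with meets and with all previously fixed levels. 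You give no indication of how this is actually done.

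Your final ``upgrade'' step is separately problematic. If the extraction succeeds as you state it --- a single subtree on which tuples of the same quantifier-free $\L_\s$-type agree on \emph{every} positive formula over $C$ --- then those tuples already have the same type over $C$ and the subtree is s-indiscernible by definition; there is nothing to upgrade and no further compactness argument is needed. Moreover, the ``tall-stalk'' trick you reach for is the argument given in \thref{s-str-str0-indiscernible-type-definable} for $\pi_\str$, not for $\pi_\s$: the level predicates $P_n$ lie in $\L_\s$, so prefixing by $\langle k\rangle^{N\cdot i}$ changes the quantifier-free $\L_\s$-type and the construction no longer produces a qf-$\L_\s$-indiscernible sequence. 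In any case that trick presupposes indiscernibility of the tree (to conclude that the stalk-sequence of \emph{parameters} is $C$-indiscernible), so it cannot be used to establish it.
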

\begin{proof}
The cited result \cite[Proposition 5.8]{dobrowolski_kim-independence_2022} only states the above theorem for trees of finite height (i.e., trees indexed by $\omega^{\leq k}$ for some $k < \omega$). However, type-definability of s-indiscernibility in thick theories is also established there \cite[Corollary 5.7]{dobrowolski_kim-independence_2022}. So the statement here is really just an easy application of compactness, and so we still attribute it to \cite{dobrowolski_kim-independence_2022}.
\end{proof}
Using the $<_\len$ relation in $\L_\str$ we get the following fact.
\begin{fact}
\thlabel{s-qf-type-is-str-qf-type-plus-levels}
Let $\bar{\eta}$ and $\bar{\nu}$ be finite meet-closed tuples in $\alpha^{<\beta}$, then $\bar{\eta} \equivsqf \bar{\nu}$ iff $\bar{\eta} \equivstrqf \bar{\nu}$ and $\ell(\bar{\eta}) = \ell(\bar{\nu})$.
\end{fact}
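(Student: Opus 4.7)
My plan is to prove both directions of the biconditional. The meet-closed assumption is used at the outset to reduce everything to atomic formulas whose arguments are tuple variables: since $\bar{\eta}$ is meet-closed, any $\L_\s$-term $t(\bar{x})$ evaluated on $\bar{\eta}$ equals some $\eta_{i_t}$, and the equation $t(\bar{x}) = x_{i_t}$ is atomic and lies in both languages. This equation transfers to $\bar{\nu}$ under either $\equivsqf$ or $\equivstrqf$, so it suffices to verify agreement of atomic formulas (and their negations) whose arguments are tuple variables.

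For the forward direction, suppose $\bar{\eta} \equivsqf \bar{\nu}$. Since $P_\gamma$ is a symbol of $\L_\s$, for each $\gamma < \beta$ and each index $i$ we have $P_\gamma(\eta_i) \iff P_\gamma(\nu_i)$, and since each node of $\alpha^{<\beta}$ lies in exactly one $P_\gamma$, this forces $\ell(\eta_i) = \ell(\nu_i)$ for every $i$, hence $\ell(\bar{\eta}) = \ell(\bar{\nu})$. The symbols $\unlhd$, $\wedge$, $<_\lex$ are common to both languages, so only $<_\len$ remains to check; but $\eta_i <_\len \eta_j \iff \ell(\eta_i) < \ell(\eta_j)$, which is already determined by the matched $P_\gamma$-values.

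For the backward direction, enumerate $\ell(\bar{\eta}) = \ell(\bar{\nu}) = \{\gamma_1 < \cdots < \gamma_k\}$. The str-qf-type records, for each pair of indices $i,j$, whether $\eta_i <_\len \eta_j$, $\eta_j <_\len \eta_i$, or neither; the third case partitions the indices into $k$ classes of elements sharing a level, and the first two totally order these classes. Since both $\bar{\eta}$ and $\bar{\nu}$ realise exactly $k$ distinct levels arranged in the same order, the $j$-th class in either tuple must consist precisely of the indices sitting at level $\gamma_j$. Hence $P_\gamma(\eta_i) \iff P_\gamma(\nu_i)$ for every $\gamma$ and $i$, and combined with agreement on $\unlhd$, $\wedge$, $<_\lex$ this gives $\bar{\eta} \equivsqf \bar{\nu}$. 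I don't anticipate a serious obstacle; the core point is that $\{P_\gamma\}_{\gamma < \beta}$ pins down absolute levels, while $<_\len$ only records relative ordering, and the set $\ell(\bar{\eta})$ supplies precisely the missing absolute information.
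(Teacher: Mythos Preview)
Your proof is correct: the reduction via meet-closedness to atomic formulas in the tuple variables is sound (since $\wedge$ and $=$ lie in both languages, the equations $t(\bar{x}) = x_{i_t}$ transfer under either equivalence), and both directions are handled cleanly---the key observation in the backward direction, that $<_\len$ determines the same ordered partition of indices into level-classes for both tuples and that the shared set $\ell(\bar{\eta}) = \ell(\bar{\nu})$ then pins down the actual levels, is exactly right. The paper states this result as a \emph{Fact} without proof, so there is no argument to compare against; your write-up would serve as a suitable justification.
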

\begin{theorem}
\thlabel{str-basing}
Let $C$ be any parameter set, $\kappa$ be any cardinal, and let $\lambda = \beth_{(2^{|T|+|C|+\kappa})^+}$. Given any tree $(a_\eta)_{\eta \in \omega^{<\lambda}}$ of $\kappa$-tuples that is s-indiscernible over $C$, there is a tree $(b_\eta)_{\eta \in \omega^{<\omega}}$ that is str-indiscernible over $C$ str-based on $(a_\eta)_{\eta \in \omega^{<\lambda}}$ over $C$.
\end{theorem}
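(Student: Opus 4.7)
The approach is to adapt the classical str-basing argument to positive logic: thin out the levels of the given s-indiscernible tree via an iterated Erd\H{o}s-Rado, then re-index the restriction by a strictly $\L_\str$-preserving embedding (as opposed to the merely qftp-respecting map $f_L$ of \thref{trees-examples}(iii), which does not preserve meets on the nose).

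The first step is the Ramsey-theoretic one. For each $n < \omega$ I would colour each $n$-element subset $\{l_1 < \ldots < l_n\} \subseteq \lambda$ by the function sending each str-qf-type $\sigma$ of a meet-closed tuple in $\omega^{<\omega}$ with exactly $n$ distinct levels to $\tp(a_{\bar{\nu}}/C) \in \S(C)$, where $\bar{\nu}$ is any meet-closed tuple in $\omega^{<\lambda}$ of str-qf-type $\sigma$ with $\ell(\bar{\nu}) = \{l_1,\ldots,l_n\}$. This is well defined: by \thref{s-qf-type-is-str-qf-type-plus-levels}, fixing the str-qf-type together with the level set pins down the s-qf-type of $\bar{\nu}$, which by s-indiscernibility of $(a_\eta)_{\eta \in \omega^{<\lambda}}$ in turn determines $\tp(a_{\bar{\nu}}/C)$. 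Since only countably many such $\sigma$ exist per $n$ and $|\S(C)| \leq 2^{|T|+|C|+\kappa}$, each colouring uses at most $2^{|T|+|C|+\kappa}$ colours, and the hypothesis $\lambda = \beth_{(2^{|T|+|C|+\kappa})^+}$ is exactly what the iterated Erd\H{o}s-Rado argument (the same one behind \thref{indiscernible-sequence-basing-on}) needs to produce an infinite $L = \{\ell_0 < \ell_1 < \cdots\} \subseteq \lambda$ on which all these colourings are constant simultaneously; denote the common value at $\sigma$ by $t_\sigma$.

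The second step is the re-indexing. Define $g_L \colon \omega^{<\omega} \to \omega^{<\lambda}$ by sending $\eta$ of length $n$ to the sequence of length $\ell_n$ whose value at position $\ell_i$ is $\eta(i)+1$ for $i < n$ and $0$ elsewhere. A direct verification---with the $+1$ shift ensuring that a non-zero value appears exactly at the positions $\ell_0, \ldots, \ell_{n-1}$---shows that $g_L$ preserves $\unlhd$, $\wedge$, $<_\lex$ and $<_\len$, so $g_L(\bar{\eta}) \equivstrqf \bar{\eta}$ for every tuple $\bar{\eta}$ in $\omega^{<\omega}$ and $\ell(g_L(\bar{\eta})) \subseteq L$. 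Setting $b_\eta := a_{g_L(\eta)}$ makes str-basing immediate: for any finite $\bar{\eta}$ in $\omega^{<\omega}$ take $\bar{\nu} := g_L(\bar{\eta})$, which satisfies $\bar{\nu} \equivstrqf \bar{\eta}$ and $b_{\bar{\eta}} = a_{\bar{\nu}}$. For str-indiscernibility, given $\bar{\eta}_1 \equivstrqf \bar{\eta}_2$ I would close both tuples under meets (which preserves str-qf-equivalence) to reduce to the meet-closed case; then $g_L(\bar{\eta}_1)$ and $g_L(\bar{\eta}_2)$ are meet-closed with the same str-qf-type and with levels in $L$, so by the Ramsey step both $a_{g_L(\bar{\eta}_1)}$ and $a_{g_L(\bar{\eta}_2)}$ realise the common type $t_\sigma$, and the original $b_{\bar{\eta}_1} \equiv_C b_{\bar{\eta}_2}$ follows by restricting back to the sub-tuples.

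The main obstacle is the cardinal arithmetic in the Ramsey step: one has to verify that fixing a str-qf-type among meet-closed tuples together with the level set really does pin down the s-qf-type (this is exactly \thref{s-qf-type-is-str-qf-type-plus-levels}, and is what forces the focus on meet-closed tuples), and then to argue that $\lambda = \beth_{(2^\mu)^+}$ for $\mu = |T|+|C|+\kappa$ is large enough for iterated Erd\H{o}s-Rado to extract a single infinite $L$ monochromatic for all arities simultaneously. It is worth noting that no appeal to thickness is needed here, since str-indiscernibility of $(b_\eta)_{\eta \in \omega^{<\omega}}$ is read off directly from the Ramsey data rather than through the partial type $\pi_\str$.
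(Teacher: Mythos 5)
The Ramsey step is where the proof breaks. You claim that from $\lambda = \beth_{(2^\mu)^+}$ (where $\mu = |T|+|C|+\kappa$) iterated Erd\H{o}s-Rado produces a \emph{single} infinite set $L \subseteq \lambda$ simultaneously monochromatic for the colourings of $n$-subsets for \emph{all} $n < \omega$ at once. That is the partition relation $\lambda \to (\omega)^{<\omega}_{2^\mu}$, and already $\lambda \to (\omega)^{<\omega}_{2}$ implies that $\lambda$ is an Erd\H{o}s cardinal, hence strongly inaccessible; but $\lambda = \beth_{(2^\mu)^+}$ is a singular cardinal of cofinality $(2^\mu)^+$, so no such $L$ can exist in ZFC. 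Note that \thref{indiscernible-sequence-basing-on} does not furnish one either: there, for each $n$ the witnessing indices $i_1 < \ldots < i_n$ are allowed to depend on $n$, and the indiscernible sequence is extracted by compactness, not as an actual subsequence of $(a_i)_{i<\lambda}$.

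Erd\H{o}s-Rado only gives, for each fixed finite collection of these colourings, a monochromatic set of any prescribed size $< \lambda$; the stages must then be patched together by compactness rather than by restricting $(a_\eta)$ along one fixed $L$. This is exactly what the paper's proof does: it constructs candidate types $p_i$ one str-qf-type at a time, using the cofinality property of $\lambda$ to stabilise the choice of $p_i$ across all sufficiently large homogeneous sets, and maintains the invariant that for every $\mu < \lambda$ there is an $I \subseteq \lambda$ of size $\mu$ on which the colourings up to stage $i$ are all constant with the already-fixed values $p_0, \ldots, p_i$. The union $\Sigma = \bigcup_i \Sigma_{p_i, \bar{\eta}_i}$ is then finitely satisfiable, each finite fragment being realised by restricting $(a_\eta)$ to a \emph{stage-dependent} level set, and $(b_\eta)_{\eta \in \omega^{<\omega}}$ is obtained as a realisation of $\Sigma$, not as a subtree of $(a_\eta)$. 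Your map $g_L$ with the $+1$ shift, which makes it an honest $\L_\str$-embedding rather than merely qftp-respecting, is a good observation and is precisely what one wants for the finite-satisfiability check, since one needs $g_L(\bar{\eta}) \equivstrqf \bar{\eta}$ on the nose with $\ell(g_L(\bar{\eta}))\subseteq L$. Your remark that thickness plays no role in this particular theorem also agrees with the paper. But the passage from ``for each finite stage there is a suitable $L$'' to ``there is a single $L$ for all stages'' is a genuine gap that cannot be closed without large cardinal hypotheses; replacing it with the compactness argument above repairs the proof and essentially reproduces the paper's.
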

The proof of this theorem relies on the Erd\H{o}s-Rado theorem, which we will recall for the reader's convenience. For $n < \omega$ and cardinals $\kappa, \lambda, \mu$ the notation $\kappa \to (\lambda)^n_\mu$ means that for any function $f: [\kappa]^n \to \mu$ we can find $X \subseteq \kappa$ with $|X| = \lambda$ such that $f$ is constant on $[X]^n$, where $[\kappa]^n$ and $[X]^n$ are the sets of subsets of size $n$ of $\kappa$ and $X$ respectively.
\begin{fact}[Erd\H{o}s-Rado]
\thlabel{erdos-rado}
For any $n < \omega$ and any infinite cardinal $\mu$ we have that
\[
\beth^+_n(\mu) \to (\mu^+)^{n+1}_\mu.
\]
\end{fact}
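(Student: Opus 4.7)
The plan is to induct on $n$. The base case $n = 0$ amounts to $\mu^+ \to (\mu^+)^1_\mu$, which is pigeonhole: any $f: \mu^+ \to \mu$ has a constant fiber of size $\mu^+$ by regularity of $\mu^+$.

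For the inductive step I assume $\beth^+_n(\mu) \to (\mu^+)^{n+1}_\mu$, set $\kappa = \beth^+_{n+1}(\mu)$, and fix $f: [\kappa]^{n+2} \to \mu$. The strategy is to reduce to the inductive hypothesis via an \emph{end-homogeneous} subset: I will construct a set $A \subseteq \kappa$ of cardinality $\beth^+_n(\mu)$ and an auxiliary coloring $g: [A]^{n+1} \to \mu$ such that $f(s \cup \{\alpha\}) = g(s)$ whenever $s \in [A]^{n+1}$, $\alpha \in A$, and $\alpha > \max s$. Given such $(A, g)$, applying the inductive hypothesis to $g$ yields $H \subseteq A$ with $|H| = \mu^+$ on which $g$ is constant; every $(n+2)$-subset of $H$ takes the form $s \cup \{\max\}$ and therefore receives the same $f$-color, delivering the required monochromatic set.

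I plan to build $(A, g)$ by transfinite recursion on $\xi < \beth^+_n(\mu)$: pick strictly increasing elements $\alpha_\xi \in \kappa$ while maintaining a ``reservoir'' $X_\xi \subseteq \kappa$ of cardinality $\kappa$ lying above all previously chosen $\alpha_\eta$, with the invariant that for every $s \in [\{\alpha_\eta : \eta < \xi\}]^{n+1}$ the value $f(s \cup \{\alpha\})$ is a single color (declared $g(s)$) for all $\alpha \in X_\xi$. The successor step $\xi \to \xi+1$ is a pigeonhole: after setting $\alpha_\xi = \min X_\xi$, the map $X_\xi \to \mu^{[\{\alpha_\eta : \eta \leq \xi\}]^{n+1}}$ sending $\alpha$ to $(f(s \cup \{\alpha\}))_s$ has codomain of cardinality at most $\mu^{(|\xi|+1)^{n+1}} \leq \mu^{\beth_n(\mu)} = \beth_{n+1}(\mu) < \kappa$, so regularity of $\kappa$ delivers some fiber of size $\kappa$, which I take as $X_{\xi+1}$.

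The key obstacle is the limit step, since a descending $|\xi|$-chain of size-$\kappa$ subsets of $\kappa$ can have intersection of size strictly less than $\kappa$. To bypass this I plan to trade the stepwise bookkeeping for an elementary-submodel argument: take a continuous $\subseteq$-increasing chain $(M_\xi)_{\xi \leq \beth^+_n(\mu)}$ of elementary submodels of $H(\Theta)$ for $\Theta$ a sufficiently large regular cardinal, each of cardinality $\beth_n(\mu)$, with $f, \kappa \in M_0$ and $M_\xi \in M_{\xi+1}$. Since $|M_\xi| < \kappa$ and $\kappa$ is regular, $\delta_\xi := \sup(M_\xi \cap \kappa) < \kappa$, and I pick $\alpha_\xi \in (\delta_\xi, \kappa)$ realizing, for every $s \in [M_\xi \cap \kappa]^{n+1}$, the ``canonical prediction'' obtained by a pigeonhole inside $M_\xi$ and lifted out by elementarity of $M_{\xi+1}$. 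Continuity of the chain ensures the $g(s)$'s cohere across limit stages, producing a coherent $(A, g)$ of the required shape, at which point the inductive hypothesis concludes.
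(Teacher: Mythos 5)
You are proving a statement the paper deliberately leaves unproved: \thref{erdos-rado} is the classical Erd\H{o}s--Rado theorem, recalled as a fact, so your attempt has to stand on its own. Your overall skeleton is the standard one and is fine: reduce to finding an end-homogeneous $A \subseteq \kappa$ of size $\beth^+_n(\mu)$ with its induced colouring $g$ on $[A]^{n+1}$, then apply the induction hypothesis; and your successor-step pigeonhole, including the computation $\mu^{\beth_n(\mu)} = \beth_{n+1}(\mu) < \kappa$, is correct. The gap is in the elementary-submodel ``fix'' of the limit problem. At stage $\xi$ you need a single ordinal $\alpha_\xi > \delta_\xi$ realising the canonical predicted colour simultaneously for \emph{every} $s \in [M_\xi \cap \kappa]^{n+1}$. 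For each individual $s$ the set of suitable $\alpha$ has size $\kappa$, but you need a point in the intersection of $\beth_n(\mu)$ many such sets, and an intersection of that many size-$\kappa$ subsets of $\kappa$ can perfectly well be empty (partition $\kappa$ into $\beth_n(\mu)$ pieces of size $\kappa$ and take the complements). This is exactly the shrinking-reservoir phenomenon you set out to avoid, merely relocated. Elementarity of $M_{\xi+1}$ cannot produce such a point: it reflects an existential statement into $M_{\xi+1}$ only once that statement is already known to hold in $H(\Theta)$ with parameters in $M_{\xi+1}$, and the truth of ``there is a simultaneous realiser above $\delta_\xi$'' is precisely what is missing.

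The standard repair runs in the opposite direction to your construction. Take a \emph{single} $M \prec H(\Theta)$ with $f, \kappa \in M$, $|M| = \beth_{n+1}(\mu)$ and ${}^{\beth_n(\mu)}M \subseteq M$ (possible since $\beth_{n+1}(\mu)^{\beth_n(\mu)} = \beth_{n+1}(\mu)$), and fix one ``oracle'' point $\alpha \in \kappa$ with $\alpha \geq \sup(M \cap \kappa)$. The predictions are then \emph{defined} by $g(s) := f(s \cup \{\alpha\})$, so they are simultaneously realised by $\alpha$ by fiat, and one recursively chooses $\alpha_\xi \in M \cap \kappa$ for $\xi < \beth^+_n(\mu)$ realising the $f$-type of $\alpha$ over $\{\alpha_\eta : \eta < \xi\}$: that type is an element of $M$ by the closure assumption, and $\alpha$ itself witnesses in $H(\Theta)$ the existential statement which elementarity then reflects into $M$. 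Note that your size-$\beth_n(\mu)$ models cannot play this role: they are too small to be closed under $\beth_n(\mu)$-sequences, and the $M_\xi \in M_{\xi+1}$ device does not put the relevant type functions inside the models. Alternatively, the classical ramification-tree construction of an end-homogeneous set avoids submodels altogether. With either of these in place of your chain, the rest of your argument goes through.
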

\begin{proof}
Let $S$ be the set of types over $C$ in $\max(\kappa, \omega)$ many variables. Then $\lambda$ has the following properties:
\begin{enumerate}[label=(\roman*)]
\item $\lambda$ is a limit cardinal with $\cf(\lambda) > |S|$,
\item for all $\mu < \lambda$ and $n < \omega$ there is $\mu' < \lambda$ such that $\mu' \to (\mu)^n_{|S|}$.
\end{enumerate}
Property (i) is immediate, and (ii) follows from Erd\H{o}s-Rado (\thref{erdos-rado}).

For any tuple $\bar{\eta}$ in $\omega^{<\omega}$ and any type $p(x_{\bar{\eta}})$, we write
\[
\Sigma_{p, \bar{\eta}}((x_\nu)_{\nu \in \omega^{<\omega}}) = \bigcup \{ p(x_{\bar{\nu}}) : \bar{\nu} \equivstrqf \bar{\eta} \}
\]
for the partial type expressing that in the tree $(x_\nu)_{\nu \in \omega^{<\omega}}$ any tuple indexed by something with the same str-quantifier-free type as $\bar{\eta}$ has type $p$.

There are countably many quantifier-free $\L_\str$-types in finitely many variables. Enumerate the ones that are the type of a meet-closed tuple in $\omega^{<\omega}$ as $(\Delta_i)_{i < \omega}$. For $i < \omega$ we let $\bar{\eta}_i$ be a tuple in $\omega^{<\omega}$ satisfying $\Delta_i$. By induction we will construct types $(p_i(x_{\bar{\eta}_i}))_{i < \omega}$ over $C$ such that:
\begin{enumerate}[label=(\arabic*)]
\item $\bigcup_{j \leq i} \Sigma_{p_j, \bar{\eta}_j}$ is consistent,
\item for every $\mu < \lambda$ there is $I \subseteq \lambda$ with $|I| = \mu$ such that for all $j \leq i$ we have that whenever $\bar{\nu} \equivstrqf \bar{\eta}_j$ with $\ell(\bar{\nu}) \subseteq I$ then $\models p_j(a_{\bar{\nu}})$.
\end{enumerate}
Suppose that $(p_j)_{j < i}$ has been constructed, we construct $p_i$. Set $n = |\ell(\bar{\eta}_i)|$. We define $f: [\lambda]^n \to S$ as follows. For $E \in [\lambda]^n$ we let $\bar{\eta}_E$ be some tuple in $\omega^{<\lambda}$ such that $\bar{\eta}_E \equivstrqf \bar{\eta}_i$ and $\ell(\bar{\eta}_E) = E$. Then we set
\[
f(E) = \tp(a_{\bar{\eta}_E}/C).
\]
Let now $\mu < \lambda$ be arbitrary. By (ii) there is $\mu' < \lambda$ such that $\mu' \to (\mu)^n_{|S|}$. By (2) there is $I \subseteq \lambda$ with $|I| = \mu'$ such that for all $j < i$ we have that whenever $\bar{\nu} \equivstrqf \bar{\eta}_j$ with $\ell(\bar{\nu}) \subseteq I$ then $\models p_j(a_{\bar{\nu}})$ (in case $i = 0$ we just take $I = \mu'$). We apply $\mu' \to (\mu)^n_{|S|}$ to the restriction of $f$ to $I$ to find $I_\mu \subseteq I$ with $|I_\mu| = \mu$ such that $f$ is constant on $[I_\mu]^n$. We write $q_\mu$ for the constant value of $f$ on $[I_\mu]^n$, that is $q_\mu = f(E)$, where $E \in [I_\mu]^n$.

As $\mu < \lambda$ was arbitrary, there is such an $I_\mu$ and $q_\mu$ for every $\mu < \lambda$. By (i) we then find a cofinal subset $J \subseteq \lambda$ of cardinals such that $q_\mu = q_{\mu'}$ for any $\mu, \mu' \in J$. Set $p_i = q_\mu$, where $\mu \in J$. For part (2) of the induction hypothesis we note that for every $\mu < \lambda$ there is $\mu' \in J$ with $\mu < \mu'$. We verify (2) for $I_{\mu'}$ (technically we would need to take a $\mu$-sized subset, but that we can clearly do). For $j < i$ the required property follows by construction of $I_{\mu'}$, in particular because it is a subset of what we called $I$ in its construction. For $j = i$ the required property follows because $f$ is constant on $[I_{\mu'}]^n$, together with s-indiscernibility of $(a_\eta)_{\eta \in \omega^{<\lambda}}$ and \thref{s-qf-type-is-str-qf-type-plus-levels}. Then part (1) follows from part (2): let $\mu \in J$ be any infinite cardinal and let $L$ be the first $\omega$ elements of $I_\mu$, then $\bigcup_{j \leq i} \Sigma_{p_j, \bar{\eta}_j}$ is realised by the restriction $(a_\eta)_{\eta \in \omega^{<\lambda} {\upharpoonright_L}}$ to levels $L$ (see \thref{trees-examples}(iii)).

This finishes the inductive construction of $(p_i(x_{\bar{\eta}_i}))_{i < \omega}$. Set
\[
\Sigma((x_\eta)_{\eta \in \omega^{<\omega}}) = \bigcup_{i < \omega} \Sigma_{p_i, \bar{\eta}_i}.
\]
By (1) this is consistent, so we let $(b_\eta)_{\eta \in \omega^{<\omega}}$ be a realisation, which has the following properties:
\begin{enumerate}[label=(\alph*)]
\item for any finite tuple $\bar{\eta}$ in $\omega^{<\omega}$ there is some $i$ such that $\cl_\wedge(\bar{\eta}) \equivstrqf \bar{\eta}_i$,
\item if $\cl_\wedge(\bar{\eta}) \equivstrqf \bar{\eta}_i$ then $b_{\cl_\wedge(\bar{\eta})} \models p_i$.
\end{enumerate}
It follows that $(b_\eta)_{\eta \in \omega^{<\omega}}$ is str-indiscernible over $C$. Let $\bar{\eta} \equivstrqf \bar{\nu}$ be finite tuples in $\omega^{<\omega}$. Then $\cl_\wedge(\bar{\eta}) \equivstrqf \cl_\wedge(\bar{\nu})$. By (a) there is $i < \omega$ such that $\cl_\wedge(\bar{\eta}) \equivstrqf \bar{\eta}_i$. Then by (b) we get that $b_{\cl_\wedge(\bar{\eta})} \models p_i$ and $b_{\cl_\wedge(\bar{\nu})} \models p_i$, so since $p_i$ is a type over $C$ we get $b_{\bar{\eta}} \equiv_C b_{\bar{\nu}}$ after restricting the types. Finally, $(b_\eta)_{\eta \in \omega^{<\omega}}$ is str-based on $(a_\eta)_{\eta \in \omega^{<\lambda}}$: for any finite tuple $\bar{\eta}$ in $\omega^{<\omega}$ we have by (a) that there is $i < \omega$ with $\cl_\wedge(\bar{\eta}) = \bar{\eta}_i$. By construction $p_i$ is realised by $a_{\bar{\nu}'}$ for some $\bar{\nu}'$ in $\omega^{<\lambda}$ with $\bar{\nu}' \equivstrqf \bar{\eta}_i$. Using the fact that $\bar{\nu}' \equivstrqf \cl_\wedge(\bar{\eta})$ we find $\bar{\nu} \subseteq \bar{\nu}'$ such that $\bar{\nu} \equivstrqf \bar{\eta}$. By (b) we have that $\tp(b_{\cl_\wedge(\bar{\eta})}/C) = p_i$, so restricting types yields $b_{\bar{\eta}} \equiv_C a_{\bar{\nu}}$, as required.
\end{proof}
\begin{repeated-theorem}[\thref{str-modelling}]
Let $T$ be a thick theory. Let $(a_\eta)_{\eta \in \omega^{<\omega}}$ be a tree of tuples of the same length and let $C$ be any set of parameters, then there is a tree $(b_\eta)_{\eta \in \omega^{<\omega}}$ that is str-indiscernible over $C$ and $\EM_\str$-based on $(a_\eta)_{\eta \in \omega^{<\omega}}$ over $C$.
\end{repeated-theorem}
\begin{proof}
By \thref{s-modelling} we find $(a'_\eta)_{\eta \in \omega^{<\omega}}$ that is s-indiscernible over $C$ and is $\EM_\s$-based on $(a_\eta)_{\eta \in \omega^{<\omega}}$ over $C$. Let $\lambda$ be the cardinal in \thref{str-basing}. Using \thref{s-str-str0-indiscernible-type-definable} we can write down a partial type $\Sigma$ for a tree $(a''_\eta)_{\eta \in \omega^{<\lambda}}$ that is s-indiscernible over $C$ and $\EM_\str$-based on $(a'_\eta)_{\eta \in \omega^{<\omega}}$ over $C$. Using $(a'_\eta)_{\eta \in \omega^{<\omega}}$, and renaming levels whenever needed, we see that $\Sigma$ is finitely satisfiable. So we find our tree $(a''_\eta)_{\eta \in \omega^{<\lambda}}$ that is  s-indiscernible over $C$ and $\EM_\str$-based on $(a'_\eta)_{\eta \in \omega^{<\omega}}$ over $C$. We apply \thref{str-basing} to $(a''_\eta)_{\eta \in \omega^{<\lambda}}$ to find $(b_\eta)_{\eta \in \omega^{<\omega}}$ that is str-indiscernible over $C$ and str-based on $(a''_\eta)_{\eta \in \omega^{<\lambda}}$ over $C$. Being str-based and $\EM_\s$-based both imply being $\EM_\str$-based, and being $\EM_\str$-based is transitive, so $(b_\eta)_{\eta \in \omega^{<\omega}}$ is $\EM_\str$-based on $(a_\eta)_{\eta \in \omega^{<\omega}}$ over $C$.
\end{proof}
The proof strategies in \thref{str-basing,str-modelling} are very similar to the case for indiscernible sequences. The use of the Erd\H{o}s-Rado theorem in \thref{str-basing} is very similar how one constructs an indiscernible sequence based on a very long sequence (see \thref{indiscernible-sequence-basing-on} and the reference there). Then \thref{str-modelling} is similar to \thref{sequence-em-modelling}: we use compactness to stretch the input and then apply the previous result that uses the Erd\H{o}s-Rado theorem. One key difference though is that \thref{str-basing} requires the input to already be s-indiscernible. This is why we assume thickness in \thref{str-modelling}. That way we can use type-definability of s-indiscernibility to guarantee that the stretched input remains s-indiscernible.
\begin{theorem}
\thlabel{str0-basing}
Let $C$ be any parameter set. Given any tree $(a_\eta)_{\eta \in \omega^{<\omega}}$ that is str-indiscernible over $C$, there is a tree $(b_\eta)_{\eta \in \omega^{<\omega}}$ that is str$_0$-indiscernible over $C$ and str$_0$-based on $(a_\eta)_{\eta \in \omega^{<\omega}}$ over $C$.
\end{theorem}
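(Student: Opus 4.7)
The proof follows the template of \thref{str-basing}, adjusted so that the target indiscernibility is str$_0$-indiscernibility. Set $\lambda = \beth_{(2^{|T|+|C|+\kappa})^+}$, where $\kappa$ is the tuple length and $S$ denotes the set of types over $C$ in $\max(\kappa, \omega)$ variables, so that $\lambda$ has the usual two properties (it is a limit with $\cf(\lambda) > |S|$, and Erd\H{o}s--Rado-type arrow relations hold cofinally below it). Since str-indiscernibility is type-definable in thick theories (\thref{s-str-str0-indiscernible-type-definable}), compactness lets us extend $(a_\eta)_{\eta \in \omega^{<\omega}}$ to a tree $(a_\eta)_{\eta \in \omega^{<\lambda}}$ that is still str-indiscernible over $C$: each finite piece of $\pi_\str$ is realised by an appropriate finite-height subtree.

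Enumerate the str$_0$-quantifier-free types of finite meet-closed tuples as $(\Delta_i)_{i < \omega}$ with representatives $\bar\eta_i \in \omega^{<\omega}$, write $m_i = |\bar\eta_i|$, and inductively construct types $p_i(x_{\bar\eta_i})$ over $C$ so that: (1) $\bigcup_{j \leq i} \Sigma_{p_j, \bar\eta_j}$ is consistent, where $\Sigma_{p_j, \bar\eta_j} = \bigcup\{p_j(x_{\bar\nu}) : \bar\nu \equivstrOqf \bar\eta_j\}$; and (2) for every $\mu < \lambda$ there exists $I \subseteq \lambda$ with $|I| = \mu$ such that for all $j \leq i$, every realisation of $\Delta_j$ whose node-levels lie in $I$ satisfies $p_j$. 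At stage $i$ one applies Erd\H{o}s--Rado to the colouring of $[\lambda]^{m_i}$ that sends $E$ to the family of types $(\tp(a_{\bar\nu_{E, \pi}}/C))_\pi$, where $\pi$ ranges over the finitely many partial-order-respecting bijections from the abstract nodes of $\bar\eta_i$ onto $E$; the colour-space then has cardinality at most $|S|$. A cofinality argument analogous to the one in \thref{str-basing}, using $\cf(\lambda) > |S|$, stabilises a choice of $p_i$ compatible with the previous $p_j$'s.

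The key difference from \thref{str-basing} is that a single str$_0$-qf type $\Delta_i$ admits several str-refinements, each potentially yielding a different type for the corresponding realisation in $(a_\eta)_{\eta \in \omega^{<\lambda}}$. The enriched colouring above records every such refinement at once, and invariant (2) asks that $p_i$ hold across all of them; pinning this down is the hard step of the argument, requiring coherence in both the level-set direction (handled by Erd\H{o}s--Rado) and the assignment-pattern direction (handled by the inductive cofinality argument together with the finiteness of the refinement space). Once the $p_i$'s are built, set $\Sigma = \bigcup_i \Sigma_{p_i, \bar\eta_i}$ and realise it to obtain $(b_\eta)_{\eta \in \omega^{<\omega}}$. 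Exactly as in the end of \thref{str-basing}, for any finite $\bar\eta$ the str$_0$-qf type of $\cl_\wedge(\bar\eta)$ is some $\Delta_i$, so $b_{\cl_\wedge(\bar\eta)}$ realises $p_i$; this immediately gives str$_0$-indiscernibility of $(b_\eta)$ over $C$ together with str$_0$-basing on $(a_\eta)_{\eta \in \omega^{<\omega}}$, since each $p_i$ is already realised by a tuple found in the original, finite-height, tree.
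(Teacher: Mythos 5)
Your approach diverges substantially from the paper's and has a genuine gap. The paper's proof of this theorem is in fact much simpler than that of \thref{str-basing}: it requires neither thickness nor the Erd\H{o}s--Rado machinery. The reason is that here the given tree $(a_\eta)$ is already str-indiscernible, so the type $\tp(a_{\bar\nu}/C)$ depends only on the str-quantifier-free type of $\bar\nu$ --- and hence not on the specific set of levels, only on the ordering of the levels. Consequently the colouring you propose (recording the types $\tp(a_{\bar\nu_{E,\pi}}/C)$) is already constant as a function of $E$ on all of $[\lambda]^{m_i}$, and the entire Erd\H{o}s--Rado and cofinality apparatus is vacuous. There is also no need to stretch the tree to height $\lambda$; indeed that compactness step would require a thickness hypothesis that the statement of the theorem does not make, so your argument proves strictly less.

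The genuinely difficult point, which you gesture at ("the hard step") but do not resolve, is the assignment-pattern direction. A single str$_0$-qf type of a meet-closed tuple admits several str-refinements --- distinct orderings of the levels --- and the corresponding types over $C$ are \emph{determined} by str-indiscernibility of $(a_\eta)$ and will in general be genuinely different. No cofinality or pigeonhole argument can make them coincide, so your invariant (2), which asks that \emph{every} realisation of $\Delta_j$ with levels in $I$ satisfy the single type $p_j$, is simply false for the tree $(a_\eta)$. What one must do instead is choose, for every str$_0$-qf type, \emph{one} str-refinement, coherently across tuples (restriction to a meet-closed subtuple must yield the induced refinement), so that the resulting partial type $\Sigma$ is finitely satisfiable. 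The paper does this by constructing an explicit family of $\L_{\str_0}$-embeddings $f_k^m : k^{\leq m} \to \omega^{<\omega}$ (following Scow and D\v{z}amonja--Shelah) with the crucial extra property that $\eta <_\lex \nu$ implies $f_k^m(\eta) <_\len f_k^m(\nu)$. Pushing $(a_\eta)$ forward along these embeddings gives well-defined types $p_{\bar\eta}$ depending only on $\qftp_{\L_{\str_0}}(\bar\eta)$, and the images of the $f_k^m$ inside $(a_\eta)$ directly witness finite satisfiability of $\Sigma$. Without such a coherent choice of refinement --- which is a combinatorial construction, not a Ramsey-type extraction --- your $\Sigma$ need not be consistent, and the proof does not go through.
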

\begin{proof}
Fix $1 \leq k < \omega$. By induction on $m < \omega$ we define $f_k^m: k^{\leq m} \to \omega^{<\omega}$ and $l_k^m < \omega$ as in \cite[Claim A.7]{scow_indiscernibles_2015} (which in turn is based on \cite[page 142]{dzamonja_lhd-maximality_2004}):
\begin{align*}
f_k^m(\emptyset) &= \emptyset &\text{for all } m < \omega, \\
l_k^m &= \max \{ \ell(f_k^m(\eta)) + 1 : \eta \in k^{\leq m} \}, \\
f_k^{m+1}(\langle i \rangle^\frown \eta) &= \langle i \rangle^\frown \langle 0 \rangle^{(i+1)l_k^m} {}^\frown f_k^m(\eta).
\end{align*}
Then for all $k,m < \omega$:
\begin{center}
(*) $f_k^m$ is an $\L_{\str_0}$-embedding such that $\eta <_\lex \nu$ implies $f_k^m(\eta) <_\len f_k^m(\nu)$.
\end{center}
Let $\bar{\eta}$ be a finite tuple in $\omega^{< \omega}$ and let $k, m < \omega$ be such that $\bar{\eta}$ is contained in $k^{\leq m}$. Then we assign the type $p_{\bar{\eta}}(x_{\bar{\eta}}) = \tp(a_{f_m^k(\bar{\eta})}/C)$ to $\bar{\eta}$. By (*) and str-indiscernibility, the type $p_{\bar{\eta}}$ does not depend on $k$ or $m$ and whenever $\bar{\eta} \equivstrOqf \bar{\nu}$ then $p_{\bar{\eta}} = p_{\bar{\nu}}$ (after renaming variables). Define
\[
\Sigma((x_\eta)_{\eta \in \omega^{<\omega}}) = \bigcup \{ p_{\bar{\eta}} : \bar{\eta} \text{ is a finite tuple in } \omega^{<\omega} \}.
\]
Let $\Sigma_0$ be any finite part of $\Sigma$, and let $k,m < \omega$ be such that the variables appearing in $\Sigma_0$ are contained in $(x_\eta)_{\eta \in k^{\leq m}}$. Then $\Sigma_0$ is realised by $(a'_\eta)_{\eta \in k^{\leq m}}$ where $a'_\eta = a_{f_k^m(\eta)}$ for all $\eta \in k^{\leq m}$. So by compactness we find a realisation $(b_\eta)_{\eta \in \omega^{<\omega}}$ of $\Sigma$, which is the tree we needed to construct. Indeed, let $\bar{\eta}$ be any finite tuple in $\omega^{<\omega}$ and let $k,m < \omega$ be such that $\bar{\eta}$ is contained in $k^{\leq m}$. Then $\tp(b_{\bar{\eta}}/C) = p_{\bar{\eta}} = \tp(a_{f_k^m(\bar{\eta})}/C)$, and so because $f_k^m(\bar{\eta}) \equivstrOqf \bar{\eta}$ this shows that $(b_\eta)_{\eta \in \omega^{<\omega}}$ is str$_0$-based on $(a_\eta)_{\eta \in \omega^{<\omega}}$ over $C$. Let now $\bar{\nu}$ be such that $\bar{\nu} \equivstrOqf \bar{\eta}$, then $\tp(b_{\bar{\eta}}/C) = p_{\bar{\eta}} = p_{\bar{\nu}} = \tp(b_{\bar{\nu}}/C)$ and we have established str$_0$-indiscernibility over $C$.
\end{proof}
\begin{repeated-theorem}[\thref{str0-modelling}]
Let $T$ be a thick theory. Let $(a_\eta)_{\eta \in \omega^{<\omega}}$ be a tree of tuples of the same length and let $C$ be any set of parameters, then there is a tree $(b_\eta)_{\eta \in \omega^{<\omega}}$ that is str$_0$-indiscernible over $C$ and $\EM_{\str_0}$-based on $(a_\eta)_{\eta \in \omega^{<\omega}}$ over $C$.
\end{repeated-theorem}
\begin{proof}
By \thref{str-modelling} there is a tree $(a'_\eta)_{\eta \in \omega^{<\omega}}$ that is str-indiscernible over $C$ and $\EM_\str$-based on $(a_\eta)_{\eta \in \omega^{<\omega}}$ over $C$. By \thref{str0-basing} there is then a tree $(b_\eta)_{\eta \in \omega^{<\omega}}$ that is str$_0$-indiscernible over $C$ and str$_0$-based on $(a'_\eta)_{\eta \in \omega^{<\omega}}$ over $C$. Being $\EM_\str$-based and being str$_0$-based both imply being $\EM_{\str_0}$-based, and being $\EM_{\str_0}$-based is transitive, so we conclude that $(b_\eta)_{\eta \in \omega^{<\omega}}$ is the required tree.
\end{proof}

\section{The array modelling theorem}
\begin{definition}[{\cite[Definition 5.4]{kim_tree_2014}}]
\thlabel{array-language}
We define the following structure on $\omega \times \omega$, where $(i, j), (s, t) \in \omega \times \omega$:
\begin{itemize}
\item $(i, j) <_1 (s, t)$ iff $i < s$,
\item $(i, j) <_2 (s, t)$ iff $i = s$ and $j <  t$.
\end{itemize}
We define the \emph{array language} to be $\L_\ar = \{ <_1, <_2 \}$ and we call the structure $\omega \times \omega$ an \emph{array}. We abbreviate notation involving $\L_\ar$ in a similar way as described at the end of \thref{tree-language}, replacing the ``s'' there by ``ar'' or ``array''.
\end{definition}
\begin{repeated-theorem}[\thref{array-modelling}]
Let $T$ be a thick theory. Let $(a_{i,j})_{i,j < \omega}$ be an array of tuples of the same length and let $C$ be any set of parameters, then there is an array $(b_{i,j})_{i,j < \omega}$ that is array-indiscernible over $C$ and $\EM_{\ar}$-based on $(a_{i,j})_{i,j < \omega}$ over $C$.
\end{repeated-theorem}
\begin{proof}
Let $J$ be the $<_\lex$-order type of $\omega^{<\omega}$ and, using compactness, let $(a'_{i, j})_{i < \omega, j \in J}$ be an array that is $\EM_\ar$-based on $(a_{i,j})_{i,j < \omega}$ over $C$. Here $\omega \times J$ carries the expected $\L_\ar$-structure: $(i, j) <_1 (s, t)$ iff $i < s$, and $(i, j) <_2 (s, t)$ iff $i = s$ and $j < t$ in the order on $J$. Let $f': \omega^{<\omega} \to J$ be the $<_\lex$-order isomorphism and define $f: \omega^{<\omega} \to \omega \times J$ as $f(\eta) = (\ell(\eta), f'(\eta))$. Then $f$ is an injection such that for any $\eta, \nu \in \omega^{<\omega}$:
\begin{enumerate}[label=(\roman*)]
\item $\eta <_\len \nu$ implies $f(\eta) <_1 f(\nu)$,
\item $\ell(\eta) = \ell(\nu)$ and $\eta <_\lex \nu$ implies $f(\eta) <_2 f(\nu)$.
\end{enumerate}
So in particular, $f$ is qftp-respecting, where $\omega^{< \omega}$ is considered as an str-tree. Define a tree $(a^*_\eta)_{\eta \in \omega^{<\omega}}$ by $a^*_\eta = a'_{f(\eta)}$. By \thref{str-modelling} we find a tree $(b^*_\eta)_{\eta \in \omega^{<\omega}}$ that is str-indiscernible over $C$ and $\EM_\str$-based on $(a^*_\eta)_{\eta \in \omega^{<\omega}}$. Define $g: \omega \times \omega \to \omega^{<\omega}$ by $g(i, j) = {\langle 0 \rangle^{2i}}^\frown \langle j + 1 \rangle$ and define an array $(b_{i, j})_{i, j < \omega}$ by $b_{i, j} = b^*_{g(i, j)}$. We claim that $(b_{i, j})_{i,j < \omega}$ is the required array.

First, we note that $g$ is qftp-respecting. Indeed, for any $(i,j), (s, t) \in \omega \times \omega$ we have that:
\begin{itemize}
\item $(i, j) <_1 (s, t)$ implies $g((i, j)) <_\len g((s, t))$ and $g((s, t)) <_\lex g((i, j))$,
\item $(i, j) <_2 (s, t)$ implies $g((i, j)) <_\lex g((s, t))$,
\item $g((i, j)) \unlhd g((s, t))$ iff $(i, j) = (s, t)$.
\end{itemize}
That leaves the relations between any meets in the image of $g$ to be checked, but this is also straightforward using the fact that the meet of a finite number of nodes in $\omega^{<\omega}$ can be written as the meet of two of those nodes, together with the fact that $g((i, j)) \wedge g((s, t)) = \langle 0 \rangle^{2 \min(i, s)}$ (unless $(i, j) = (s, t)$, of course). We can thus apply the re-indexing lemma, \thref{reindexing-lemma}(i), and get that $(b_{i,j})_{i,j < \omega}$ is array-indiscernible over $C$ because $(b^*_\eta)_{\eta \in \omega^{<\omega}}$ is str-indiscernible over $C$.

We will again use the re-indexing lemma, \thref{reindexing-lemma}(iii), to conclude that $(b_{i,j})_{i,j < \omega}$ is $\EM_\ar$-based on $(a'_{i,j})_{i < \omega, j \in J}$ over $C$. From this the result then follows because $(a'_{i,j})_{i < \omega, j \in J}$ is $\EM_\ar$-based on $(a_{i,j})_{i,j < \omega}$ over $C$. So we only need to verify that for any finite tuple $\bar{\eta}$ in $\omega \times \omega$ we have that $\bar{\eta} \equivarqf fg(\bar{\eta})$. Indeed, let $(i, j), (s, t) \in \omega \times \omega$, then
\begin{itemize}
\item $f$ and $g$ are both injective functions, so equality is preserved and reflected;
\item if $(i, j) <_1 (s, t)$ then $g((i, j)) <_\len g((s, t))$, and so $fg((i, j)) <_1 fg((s, t))$;
\item if $fg((i, j)) <_1 fg((s, t))$ then $2i + 1 = \ell(g((i, j))) < \ell(g((s, t))) = 2s + 1$ and so $(i, j) <_1 (s, t)$;
\item if $(i, j) <_2 (s, t)$ then $\ell(g((i, j))) = \ell(g((s, t)))$ and $g((i, j)) <_\lex g((s, t))$, so $fg((i, j)) <_2 fg((s, t))$;
\item if $fg((i, j)) <_2 fg((s, t))$ then $\ell(g((i, j))) = \ell(g((s, t)))$ and $f'(g((i, j))) < f'(g((s, t)))$, the latter means that $g((i, j))) <_\lex g((s, t))$ and since their lengths are the same, and so $i = s$, we must have $j+1 < t+1$ and thus $(i, j) <_2 (s, t)$.
\end{itemize}
\end{proof}
\begin{remark}
\thlabel{kim-kim-scow-embedding}
The proof of \thref{array-modelling} is based on that of \cite[Theorem 5.5]{kim_tree_2014}. However, there the existence of a non-existing embedding is claimed. In more detail, we view $\omega^{<\omega}$ as an $\L_\ar$-structure by interpreting $<_1$ as $<_\len$ and setting $\eta <_2 \nu$ iff $\ell(\eta) = \ell(\nu)$ and $\eta <_\lex \nu$. The claim is then that there is an $\L_\ar$-embedding $f: \omega^{<\omega} \to \omega \times \omega$. Such an $f$ cannot exist. Our proof fixes this by stretching the original array, so that our $f$ can play the role of this embedding, and the third author of \cite{kim_tree_2014} has a similar fix in \cite[Corollary 3.8]{scow_ramsey_2021}.

Suppose that an $f$ as above exists. Let $\eta, \nu \in \omega^{<\omega}$ be such that $\ell(\eta) = \ell(\nu)$, and let $(i, j) = f(\eta)$ and $(s, t) = f(\nu)$. Then we must have $i = s$, because $<_1$ is interpreted as $<_\len$ in the tree and $<_1$ is preserved and reflected by $f$. Let $g$ be the restriction of $f$ to the nodes of length two. Then the image of $g$ is contained in $\{n\} \times \omega$ for some $n < \omega$. So $g(\langle 1, 0 \rangle) = (n, k)$ for some $k < \omega$. However, $\langle 0, i \rangle <_\lex \langle 1, 0 \rangle$ for all $i < \omega$. We thus get that the second coordinate of $g(\langle 0, i \rangle)$ is strictly less than $k$ for all $i < \omega$, contradicting injectivity of $g$. So $f$ cannot exist.
\end{remark}
Though we have no use for it, the following may be useful in future applications.
\begin{proposition}
\thlabel{array-indiscernibility-type-definable}
Let $T$ be a thick theory. The property of being array-indiscernible is type-definable. That is, let $\pi_\ar((x_{i,j})_{i,j < \omega}, y)$ be the union of the following partial types:
\begin{itemize}
\item a partial type that expresses that $((x_{i,j})_{j < \omega})_{i < \omega}$ is $y$-indiscernible;
\item for each $i < \omega$, a partial type that expresses that $(x_{i,j})_{j < \omega}$ is indiscernible over $(y, (x_{k,j})_{k,j < \omega, k \neq i})$.
\end{itemize}
Then for any parameter set $C$ we have that $\models \pi_\ar((a_{i,j})_{i,j < \omega}, C)$ iff $(a_{i,j})_{i,j < \omega}$ is array-indiscernible over $C$.
\end{proposition}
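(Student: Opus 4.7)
The plan is to prove the two directions separately, relying on thickness so that $\Sigma_1$ and each $\Sigma_{2,i}$ actually are partial types: the $\d_z(x,y) \leq 2$ characterisation from \thref{thickness} (as used in the proof of \thref{s-str-str0-indiscernible-type-definable}) makes $y$-indiscernibility of a sequence type-definable, and enlarging the parameter slot $y$ to include other rows, as required for $\Sigma_{2,i}$, causes no issue.

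For the easy direction, suppose $(a_{i,j})_{i,j<\omega}$ is array-indiscernible over $C$. To verify $\Sigma_1$, fix any $i_1 < \ldots < i_m$ and $s_1 < \ldots < s_m$ in $\omega$ and any finite list of columns $j_1, \ldots, j_N$: the index tuples $((i_k, j_l))_{k,l}$ and $((s_k, j_l))_{k,l}$ share their QF $\L_\ar$-type (matching $<_1$- and $<_2$-relations and equality pattern), so array-indiscernibility yields $(a_{i_k, j_l})_{k,l} \equiv_C (a_{s_k, j_l})_{k,l}$. Reducing $\equiv$ over $C$ together with other rows to its finite restrictions, the analogous comparison of $((i, j_1), \ldots, (i, j_n), F)$ and $((i, j'_1), \ldots, (i, j'_n), F)$, with $F$ any finite set of indices from rows other than $i$, gives each $\Sigma_{2,i}$.

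For the converse, assume $\models \pi_\ar((a_{i,j})_{i,j<\omega}, C)$ and take finite tuples $\bar{\eta} \equivarqf \bar{\nu}$ in $\omega \times \omega$. The key observation is that a QF $\L_\ar$-type factors cleanly into (a) an equality pattern, (b) the partition of entries into rows together with the order on the distinct rows used, and (c) the order of second coordinates within each such row. Write $i_{(1)} < \ldots < i_{(m)}$ for the distinct rows of $\bar{\eta}$ and $s_{(1)} < \ldots < s_{(m)}$ for those of $\bar{\nu}$; list the second coordinates appearing in row $i_{(k)}$ of $\bar{\eta}$ as $j^k_1 < \ldots < j^k_{n_k}$, and similarly $t^k_1 < \ldots < t^k_{n_k}$ for row $s_{(k)}$ of $\bar{\nu}$. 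The argument then proceeds in two steps chained by transitivity of $\equiv_C$. Step 1 uses $\Sigma_1$ and homogeneity of $\MM$ to produce $\sigma \in \Aut(\MM/C)$ with $\sigma(a_{i_{(k)}, j}) = a_{s_{(k)}, j}$ for all $k \leq m$ and $j < \omega$, yielding $a_{\bar{\eta}} \equiv_C (a_{s_{(k)}, j^k_l})_{k,l}$. Step 2 applies $\Sigma_{2, s_{(k)}}$ for $k = 1, \ldots, m$ in turn to obtain $\tau_k \in \Aut(\MM/C \cup \{a_{p, q} : p \neq s_{(k)}\})$ sending $a_{s_{(k)}, j^k_l}$ to $a_{s_{(k)}, t^k_l}$ for all $l$; since each $\tau_k$ fixes every entry outside row $s_{(k)}$, it leaves previously adjusted rows untouched, so the composition $\tau_m \circ \cdots \circ \tau_1$ converts the intermediate tuple into $a_{\bar{\nu}}$ and establishes $a_{\bar{\eta}} \equiv_C a_{\bar{\nu}}$.

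The main obstacle is bookkeeping: confirming the factorization (a)--(c) of QF $\L_\ar$-types, and checking that the step-2 automorphisms compose cleanly. Both points are routine; (a)--(c) is immediate from inspecting the two binary relations $<_1$ and $<_2$, and non-interference of the $\tau_k$ follows because $\Sigma_{2, s_{(k)}}$ pins down every entry outside row $s_{(k)}$ pointwise.
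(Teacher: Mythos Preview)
Your proposal is correct and follows essentially the same two-step strategy as the paper: first use $\Sigma_1$ to align the row indices while keeping the column indices fixed, then iterate through the rows using $\Sigma_{2,i}$ to adjust the column indices within each row, chaining the resulting $\equiv_C$ relations. The only cosmetic difference is that you phrase each step via explicit automorphisms from homogeneity, whereas the paper works directly with $\equiv_C$; the content is the same.
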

\begin{proof}
If $(a_{i,j})_{i,j < \omega}$ is array-indiscernible over $C$ then we clearly have that:
\begin{itemize}
\item $((a_{i,j})_{j < \omega})_{i < \omega}$ is $C$-indiscernible;
\item for each $i < \omega$, $(a_{i,j})_{j < \omega}$ is indiscernible over $(C, (a_{k,j})_{k,j < \omega, k \neq i})$.
\end{itemize}
So $\models \pi_\ar((a_{i,j})_{i,j < \omega}, C)$.

For the converse, we assume $\models \pi_\ar((a_{i,j})_{i,j < \omega}, C)$ and let $(q_1, r_1), \ldots, (q_n, r_n) \in \omega \times \omega$ and $(s_1, t_1), \ldots, (s_n, t_n) \in \omega \times \omega$ be such that $(q_1, r_1) \ldots (q_n, r_n) \equivarqf (s_1, t_1) \ldots (s_n, t_n)$. For notational convenience we may assume that $(q_1, r_1) \leq_1 (q_2, r_2) \leq_1 \ldots \leq_1 (q_n, r_n)$ and (necessarily) the same for the $(s_i, t_i)$. Then because $((a_{i,j})_{j < \omega})_{i < \omega}$ is $C$-indiscernible we have that
\[
a_{q_1, r_1} \ldots a_{q_n, r_n} \equiv_C a_{s_1, r_1} \ldots a_{s_n, r_n}.
\]
Now let $1 \leq m \leq n$ be maximal such that $s_1 = s_m$ (so for all $1 \leq m' \leq m$ we have that $s_1 = s_{m'}$). Then because $(a_{s_1,j})_{j < \omega}$ is indiscernible over $(C, (a_{k,j})_{k,j < \omega, k \neq s_1})$, we have that
\[
a_{s_1, r_1} \ldots a_{s_n, r_n} \equiv_C a_{s_1, t_1} \ldots a_{s_m, t_m} a_{s_{m+1}, r_{m+1}} \ldots a_{s_n, r_n}.
\]
Repeating this process we find
\[
a_{s_1, r_1} \ldots a_{s_n, r_n} \equiv_C a_{s_1, t_1} \ldots a_{s_n, t_n},
\]
and thus $a_{q_1, r_1} \ldots a_{q_n, r_n} \equiv_C a_{s_1, t_1} \ldots a_{s_n, t_n}$, as required.
\end{proof}
\section{An application: \TPtwo}
The definition of $2$-\TPtwo in positive logic first appeared in \cite[Definition 6.1]{haykazyan_existentially_2021}. We take the version from \cite{dmitrieva_dividing_2023}, which defines $k$-\TPtwo for $k \geq 2$.
\begin{definition}[{\cite[Definition 4.5]{dmitrieva_dividing_2023}}]
\thlabel{def-tp2}
A formula $\phi(x,y)$ has the \emph{$k$-tree property of the second kind} ($k$-\TPtwo) for $k \geq 2$ if there are $(a_{i,j})_{i,j < \omega}$ and $\psi(y_1, \ldots, y_k)$ that implies $\neg \exists x (\phi(x, y_1) \wedge \ldots \wedge \phi(x, y_k))$ modulo $T$ such that:
\begin{enumerate}[label=(\roman*)]
\item for all $\sigma \in \omega^\omega$ the set $\{ \phi(x, a_{i, \sigma(i)}): i < \omega\}$ is consistent,
\item for all $i < \omega$ and $j_1 < \ldots < j_k < \omega$ we have that $\models \psi(a_{i,j_1}, \ldots, a_{i,j_k})$.
\end{enumerate}
We say that a theory $T$ has $k$-\TPtwo if some formula has $k$-\TPtwo.
\end{definition}
\begin{lemma}
\thlabel{indiscernible-tp2}
Let $T$ be a thick theory. A formula $\phi(x, y)$ has $k$-\TPtwo for $k \geq 2$ if and only if there is an array-indiscernible array $(a_{i,j})_{i,j < \omega}$ such that
\begin{enumerate}[label=(\roman*)]
\item $\{ \phi(x, a_{i, 0}) : i < \omega\}$ is consistent,
\item $\{ \phi(x, a_{0, i}) : i < \omega\}$ is $k$-inconsistent.
\end{enumerate}
\end{lemma}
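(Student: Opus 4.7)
The plan is to derive the forward direction from the array modelling theorem \thref{array-modelling} and the backward direction from the existentially closed property of the monster, spreading data along quantifier-free $\L_\ar$-types via array-indiscernibility.

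For the forward direction, suppose $\phi(x,y)$ has $k$-\TPtwo, witnessed by $(a_{i,j})_{i,j<\omega}$ and $\psi(y_0,\ldots,y_{k-1})$. I apply \thref{array-modelling} over $\emptyset$ to produce an array $(b_{i,j})_{i,j<\omega}$ that is array-indiscernible and $\EM_\ar$-based on $(a_{i,j})_{i,j<\omega}$. Condition (ii) of the lemma will follow once I check that $\psi(x_{(0,0)},\ldots,x_{(0,k-1)})$ lies in $\EM_\ar((a_{i,j})/\emptyset)$: the associated qftp is the $<_2$-chain of length $k$, any realisation of which has the form $((s,t_1),\ldots,(s,t_k))$ with $t_1<\ldots<t_k$, and \thref{def-tp2}(ii) gives $\models\psi(a_{s,t_1},\ldots,a_{s,t_k})$. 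Transferring to $b$ and then using array-indiscernibility yields $\models\psi(b_{0,j_1},\ldots,b_{0,j_k})$ for all $j_1<\ldots<j_k$, which is $k$-inconsistency of $\{\phi(x, b_{0,i}):i<\omega\}$. Condition (i) will come similarly by showing that each positive formula $\exists x\bigwedge_{i<n}\phi(x, y_{(i,0)})$ lies in $\EM_\ar((a_{i,j})/\emptyset)$: any realisation of the associated $<_1$-chain qftp has the form $((s_1,t_1),\ldots,(s_n,t_n))$ with $s_1<\ldots<s_n$, and \thref{def-tp2}(i) applied to any $\sigma\in\omega^\omega$ with $\sigma(s_i)=t_i$ gives consistency of $\{\phi(x, a_{s_i,t_i}):i<n\}$. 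Saturation of $\MM$ then upgrades this to consistency of $\{\phi(x, b_{i,0}):i<\omega\}$.

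For the backward direction, assume $(a_{i,j})_{i,j<\omega}$ is array-indiscernible and satisfies (i) and (ii). Inconsistency of $\{\phi(x, a_{0,0}),\ldots,\phi(x, a_{0,k-1})\}$ says that the positive formula $\exists x\bigwedge_{j<k}\phi(x, y_j)$ fails on $(a_{0,0},\ldots,a_{0,k-1})$; since $\MM$ is e.c.\ (\thref{existentially-closed}), there is a positive $\psi(y_0,\ldots,y_{k-1})$ satisfied by $(a_{0,0},\ldots,a_{0,k-1})$ and implying $\neg\exists x\bigwedge_{j<k}\phi(x, y_j)$ modulo $T$. Array-indiscernibility then spreads $\psi$ to all $(a_{i,j_1},\ldots,a_{i,j_k})$ with $j_1<\ldots<j_k$, giving \thref{def-tp2}(ii). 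For \thref{def-tp2}(i), note that for any $\sigma\in\omega^\omega$ and any $n<\omega$ we have $((0,\sigma(0)),\ldots,(n-1,\sigma(n-1)))\equivarqf ((0,0),\ldots,(n-1,0))$, so consistency of every finite part of $\{\phi(x, a_{i,\sigma(i)}):i<\omega\}$ follows from (i), and saturation of $\MM$ concludes.

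The main subtlety, compared with the first-order setting, is the extraction of $\psi$ in the backward direction: classically one could take $\psi=\neg\exists x\bigwedge_j\phi(x, y_j)$, but in positive logic one needs the e.c.\ property of $\MM$ to supply a positive witness. Everything else is a careful but routine translation of quantifier-free $\L_\ar$-types along the indexing sets involved.
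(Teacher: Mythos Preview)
Your proof is correct and follows essentially the same route as the paper: apply \thref{array-modelling} and observe that both \thref{def-tp2}(i) and (ii) are captured by the $\EM_\ar$-type for the forward direction, and for the converse use the e.c.\ property of $\MM$ to extract $\psi$ and then spread via array-indiscernibility. You spell out in more detail why the relevant formulas lie in the $\EM_\ar$-type and make the use of \thref{existentially-closed} explicit, but the argument is the same as the paper's.
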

\begin{proof}
If $\phi(x, y)$ has $k$-\TPtwo as witnessed by $(a'_{i,j})_{i,j < \omega}$ and $\psi(y_1, \ldots, y_k)$ then we can use \thref{array-modelling} to $\EM_\ar$-base an array-indiscernible array $(a_{i,j})_{i, j < \omega}$ on $(a'_{i,j})_{i,j < \omega}$. Then points (i) and (ii) in \thref{def-tp2} are captured by the $\EM_\ar$-type of $(a'_{i,j})_{i,j < \omega}$ and respectively yield points (i) and (ii) for $(a_{i,j})_{i, j < \omega}$ as above.

Conversely, let $(a_{i,j})_{i, j < \omega}$ be as above. Then $((i, \sigma(i)))_{i < \omega} \equivarqf ((i,0))_{i < \omega}$ for any $\sigma \in \omega^\omega$, so by (i) and array-indiscernibility $\{ \phi(x, a_{i, \sigma(i)}) : i < \omega\}$ is consistent. By (ii) we have that $\not \models \exists x(\phi(x, a_{0,0}) \wedge \ldots \wedge \phi(x, a_{0,k-1}))$. Let $\psi(y_1, \ldots, y_k)$ be such that it implies $\neg \exists x(\phi(x, y_1) \wedge \ldots \wedge \phi(x, y_k))$ modulo $T$ and $\models \psi(a_{0,0}, \ldots, a_{0,k-1})$. Then by array-indiscernibility $\models \psi(a_{i,j_1}, \ldots, a_{i,j_k})$ for all $i < \omega$ and $j_1 < \ldots < j_k < \omega$. We conclude that $(a_{i,j})_{i,j < \omega}$ and $\psi$ witness $k$-\TPtwo for $\phi(x, y)$.
\end{proof}
\begin{repeated-theorem}[\thref{k-tp2-iff-2-tp2}]
Let $T$ be a thick theory. If $\phi(x, y)$ has $k$-\TPtwo for some $k \geq 2$ then some conjunction $\bigwedge_{i = 1}^n \phi(x, y_i)$ has $2$-\TPtwo. Hence, $T$ has $k$-\TPtwo for some $k \geq 2$ iff $T$ has $2$-\TPtwo.
\end{repeated-theorem}
\begin{proof}
Based on \cite[Proposition 5.7]{kim_tree_2014}. We prove this by induction on $k \geq 2$. The base case, $k = 2$, is trivial. So assume that the theorem holds for $2, \ldots, k - 1$ and suppose that $\phi(x, y)$ has $k$-\TPtwo. By \thref{indiscernible-tp2} there is an array-indiscernible array $(a_{i,j})_{i,j < \omega}$ such that
\begin{enumerate}[label=(\roman*)]
\item $\{ \phi(x, a_{i, 0}) : i < \omega\}$ is consistent,
\item $\{ \phi(x, a_{0, i}) : i < \omega\}$ is $k$-inconsistent.
\end{enumerate}
We finish the induction step, and thus the proof, by considering two cases.
\begin{itemize}
\item The case where $\{ \phi(x, a_{i, 0}),\phi(x, a_{i, 1}) : i < \omega\}$ is consistent. Define an array $(b_{i,j})_{i,j < \omega}$ by $b_{i,j} = (a_{i,2j}, a_{i,2j+1})$ for all $i,j < \omega$, then $(b_{i,j})_{i,j < \omega}$ is array-indiscernible. Let $\phi'(x, y_0, y_1)$ be $\phi(x, y_0) \wedge \phi(x, y_1)$. Then by assumption $\{ \phi'(x, b_{i, 0}) : i < \omega\}$ is consistent, while $\{ \phi'(x, b_{0, i}) : i < \omega\}$ is $\lceil k / 2 \rceil$-inconsistent. So by \thref{indiscernible-tp2} we have that $\phi'(x; y_0, y_1)$ has $\lceil k / 2 \rceil$-\TPtwo, and we conclude this case by using the induction hypothesis.
\item The case where $\{ \phi(x, a_{i, 0}),\phi(x, a_{i, 1}) : i < \omega\}$ is inconsistent. By compactness there is then $n < \omega$ such that $\{ \phi(x, a_{i, 0}),\phi(x, a_{i, 1}) : i < n\}$ is inconsistent. Define an array $(b_{i,j})_{i,j < \omega}$ by $b_{i,j} = (a_{ni, j}, \ldots, a_{ni+n-1,j})$, then $(b_{i,j})_{i,j < \omega}$ is array-indiscernible. Let $\phi'(x, y_0, \ldots, y_{n-1})$ be $\phi(x, y_0) \wedge \ldots \wedge \phi(x, y_{n-1})$. Then $\{ \phi'(x, b_{i, 0}) : i < \omega\}$ is consistent, while by assumption $\{ \phi'(x, b_{0, i}) : i < \omega\}$ is $2$-inconsistent. So by \thref{indiscernible-tp2} we have that $\phi'(x; y_0, \ldots, y_{n-1})$ has $2$-\TPtwo, completing the induction step for this case.
\end{itemize}
\end{proof}

\bibliographystyle{alpha}
\bibliography{bibfile}


\end{document}